\numberwithin{equation}{section}
\definecolor{brown}{cmyk}{0, 0.72, 1, 0.45}
\definecolor{grey}{gray}{0.5}
\renewcommand{\epsilon}{\varepsilon}
\def\deg{\text{deg}}
\def\cG{{\cal G}}
\newcounter{rot}
\def\leb{\leq_b}
\def\a{\alpha} \def\b{\beta} \def\d{\delta} \def\D{\Delta}
\def\e{\epsilon} \def\f{\phi}   \def\g{\gamma}
\def\z{\zeta} \def\th{\theta}    
 \def\m{\mu} \def\n{\nu} \def\p{\pi}
\def\r{\rho}  \def\s{\sigma} 
\def\t{\tau}   \def\Om{\Omega}
\def\cC{{\cal C}}
\newtheorem{remark}{Remark}
\newtheorem*{conjecture*}{Conjecture}
\newtheorem{theorem}{Theorem}[section]
\newtheorem*{theorem*}{Theorem}
\newtheorem{lemma}[theorem]{Lemma}
\newtheorem{bound}{Bound}[section]
\newcommand{\ol}[1]{\overline{#1}}
\newcommand{\rdown}[1]{{\left\lfloor #1\right \rfloor}}
\newcommand{\brac}[1]{\left(#1\right)}
\newcommand{\bfrac}[2]{\left(\frac{#1}{#2}\right)}
\def\cE{{\cal E}}
\newcommand{\set}[1]{\left\{#1\right\}}
\def\E{\mathbb{E}}
\def\Pr{\mathbb{P}}
\newcommand{\ignore}[1]{}
\newcommand{\cA}{{\cal A}}
\newcommand{\card}[1]{\left|#1\right|}
\newcommand{\beq}[1]{\begin{equation}\label{#1}}
\newcommand{\eeq}{\end{equation}}
\newcommand{\Bin}{\operatorname{Bin}}
\newcommand{\proofstart}{{\bf Proof\hspace{2em}}}
\newcommand{\proofend}{\hspace*{\fill}\mbox{$\Box$}}
\DeclareMathOperator{\Cov}{Cov}
\DeclareMathOperator{\Var}{Var}
\title{On the game chromatic number of sparse random graphs}
\author{Alan Frieze\thanks{Research supported in part by NSF Grant CCF1013110. Email: {\tt alan@random.math.cmu.edu}.}
\ and Simcha Haber\thanks{Email: {\tt simi@andrew.cmu.edu}.} \ and  Mikhail Lavrov\thanks{Email:
{\tt mlavrov@andrew.cmu.edu}.}\\
Department of Mathematical Sciences,\\
Carnegie Mellon University,\\
Pittsburgh PA, 15213,\\
USA.
}
\newlength{\blremwidth}
\newcounter{blremcounter}
\begin{document}
\maketitle
\begin{abstract}
Given a graph $G$ and an integer $k$, two players take turns coloring the vertices of
$G$ one by one using $k$ colors so that neighboring vertices get different colors.
The first player wins iff at the end of the game all the vertices of $G$ are colored.
The game chromatic number $\chi_g(G)$ is the minimum $k$ for which the first player
has a winning strategy.
The paper \cite{BFS} began the analysis of the asymptotic behavior of this
parameter for a random graph $G_{n,p}$.
This paper provides some further analysis for graphs with constant average degree i.e. $np=O(1)$
and for random regular graphs. We show that w.h.p. $c_1\chi(G_{n,p})\leq \chi_g(G_{n,p})\leq
c_2\chi(G_{n,p})$ for some absolute constants $1<c_1< c_2$. We also prove that if $G_{n,3}$
denotes a random $n$-vertex cubic graph then w.h.p. $\chi_g(G_{n,3})=4$. 

\end{abstract}

\section{Introduction}
Let $G=(V,E)$ be a graph and let $k$ be a positive integer. Consider the following game in which
two players A(lice) and B(ob) take turns in coloring the vertices of $G$ with
$k$ colors. Each move consists of choosing an uncolored vertex of the graph and assigning to it
a color from $\{1, \ldots, k\}$ so that the resulting coloring is {\em proper},
i.e., adjacent vertices get different colors.
A wins if all the vertices of $G$ are eventually colored. B wins if at some point in the
game the current partial coloring cannot be extended to a complete coloring of $G$,
i.e., there is an uncolored vertex such that each of the $k$ colors appears
at least once in its neighborhood. We assume that A goes first (our results will not
be sensitive to this choice). The {\em game chromatic number}
$\chi_g(G)$ is the least integer $k$ for which A has a winning
strategy.

This parameter is well defined, since it is easy to see that A always wins if the number of
colors is larger than the maximum degree of $G$.  Clearly, $\chi_g(G)$ is at least as large as
the ordinary chromatic number $\chi(G)$, but it can be considerably more.
The game was first considered by Brams about 25 years ago in the
context of coloring planar graphs and was described in Martin
Gardner's column \cite{Ga} in Scientific American in 1981. The game remained
unnoticed by the graph-theoretic community until
Bodlaender \cite{Bod} re-invented it. For a survey see
Bartnicki, Grytczuk, Kierstead and Zhu \cite{BGKZ}.

In this paper, we study the game chromatic number of the random graph \( G_{n,p} \)
and the random $d$-regular graph $G_{n,d}$.
Define $b=\frac{1}{1-p}$. The following estimates were proved in Bohman, Frieze and Sudakov \cite{BFS}.
\begin{theorem}\label{th1}\
\begin{description}
\item[(a)] There exists $K>0$ such that for $\e>0$ and $p \geq (\ln n)^{K\e^{-3}} /n$
we have that w.h.p.\footnote{A sequence of events $\cE_n$ occurs {\em with high
probability} (w.h.p.) if $\lim_{n\to\infty}\Pr(\cE_n)=1$}
$$\chi_g(G_{n,p})\geq (1-\e)\frac{n}{\log_bnp}\, . $$
\item[(b)] If $\a>2$ is a constant, \( K=\max\{\frac{2\a}{\a-1},\frac{\a}{\a-2}\} \)
and $p \geq (\ln n)^K/n$ then w.h.p.
$$\chi_g(G_{n,p})\leq \a \frac{n}{\log_bnp}\, .$$
\end{description}
\end{theorem}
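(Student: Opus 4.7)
The targets compare $\chi_g(G_{n,p})$ with $\chi(G_{n,p}) = (1+o(1))\frac{n}{2\log_b np}$, the latter reflecting $\alpha(G_{n,p}) = (2+o(1))\log_b np$ w.h.p.; note also that a \emph{greedy} proper coloring of $G_{n,p}$ uses color classes only of size $\sim \log_b np$, which is already a factor of two short of the optimum.

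\textbf{Plan for (a) — lower bound.} The strategy for Bob is to force every color class to behave like a greedy independent set, of size at most $(1+\e/2)\log_b np$, so that the $n$ vertices require $\chi_g \geq (1-\e)\frac{n}{\log_b np}$. On each of his turns, Bob plays a (pseudo-)random uncolored vertex and assigns it either the least-used color or a fresh color, whichever keeps all classes below the target cap. Every color class then looks like a random independent set ``seeded'' by a vertex of Bob's choosing, and the key estimate is a first-moment bound: in $G_{n,p}$, for any independent set $S$ of size $(1+\e/2)\log_b np$ containing such a seed, the expected number of vertices $v$ with $S\cup\{v\}$ still independent drops below $1$ once $p \geq (\ln n)^{K\e^{-3}}/n$. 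Summing over color classes yields the stated bound.

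\textbf{Plan for (b) — upper bound.} Alice uses the \emph{activation strategy} underlying the game coloring number $\text{col}_g(G)$. She maintains, online, a linear order on $V$ in which every vertex has small back-degree; at each of her turns she plays the uncolored vertex currently most constrained (fewest available colors) and gives it any legal color. Since $\chi_g(G) \leq \text{col}_g(G)$, it suffices to show $\text{col}_g(G_{n,p}) \leq \a\frac{n}{\log_b np}$. For $p$ in the stated range, $G_{n,p}$ has a tightly concentrated degeneracy sequence, and an activation order based on an online degeneracy elimination witnesses the bound, with the hypothesis $\a>2$ providing the slack needed to absorb the average-degree contribution.

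\textbf{Main obstacle.} The real work is in the lower bound. Bob's random-seeding strategy must be robust against an adaptive Alice, who might concentrate her moves in a favorable sub-region of the graph to build a large color class there. I expect the heart of the argument to lie in showing that, whatever Alice does, at most $o(n/\log_b np)$ color classes can escape Bob's caps, and that the first-moment bound on extensions of seeded independent sets holds uniformly — likely requiring a Janson-type concentration estimate, from which the peculiar $(\ln n)^{K\e^{-3}}$ threshold is inherited. For the upper bound the subtlety is maintaining the activation order online against Bob's adversarial play; the milder probability threshold $p \geq (\ln n)^K/n$ is driven by the need for concentration of the degeneracy sequence.
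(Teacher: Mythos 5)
First, a framing point: Theorem~\ref{th1} is not proved in this paper at all --- it is quoted from Bohman, Frieze and Sudakov \cite{BFS}, and here only its ingredient lemmas are revisited in Section~\ref{usingKV} for the regular-graph adaptation. So there is no in-paper proof of this exact statement, but the strategies from \cite{BFS} are the same ones used for the analogous Theorem~\ref{th2}, which this paper does prove in full, and I compare against those.

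Your plan for part (a) uses the wrong strategy for B and would not work. You propose that B caps color classes by playing pseudo-random vertices and assigning ``least-used or fresh'' colors, but B has no mechanism to stop A from growing whichever class A wants; A can simply ignore B's classes. The strategy that actually works (both in \cite{BFS} and in Section~\ref{Gnplow} here) is for B to \emph{mirror} A's color: whenever A colors a vertex with color $i$, B colors a uniformly random still-eligible vertex with that same color $i$. Each of B's random insertions kills a $p$-fraction of the remaining candidates for class $i$, and because B's choice is random these deletions cannot be anticipated or wasted around by A; a union bound over A's possible half of the class, combined with a careful estimate for the probability of B's random choices (the $\E(Y_t)$ computation in Section~\ref{Gnplow}), shows no class can grow much past $\log_b np$, about half the independence number. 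Your ``seeded independent set'' first-moment idea is in the right spirit, but without B forcing each class to be half-random there is nothing to which it applies, and the bound also needs a union bound over A's moves, not just an expectation computation.

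Your plan for part (b) is quantitatively off by a logarithmic factor. You reduce to bounding the game coloring number via $\chi_g(G)\le \mathrm{col}_g(G)$, but $\mathrm{col}_g(G)\ge \mathrm{col}(G)=\mathrm{degeneracy}(G)+1$, and for $G_{n,p}$ in the relevant range the degeneracy is $\Theta(np)$, while the target is $\Theta\bigl(n/\log_b np\bigr) = \Theta(np/\ln np)$. So no activation-order argument can reach the claimed bound; that route is dead regardless of concentration. The actual strategy for A (in \cite{BFS} and in Section~\ref{Gnphigh} here) is direct: A always colors the uncolored vertex with the fewest currently available colors, using any legal color. A concentration estimate (the analogue of Lemma~\ref{lem5}, using convexity of $(1-p)^t$ and a Chernoff bound) shows that under any partial coloring at most a $\gamma$-fraction of uncolored vertices have fewer than $\beta/2$ available colors, with $\a>2$ entering to make $\beta\to\infty$; the remaining difficulty is the endgame on those few stubborn vertices, handled by the $U_0\supseteq U_1\supseteq\cdots$ decomposition plus the tree-coloring strategy of \cite{R1}. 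None of that resembles your online degeneracy-elimination plan.
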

In this paper we complement these results by considering the case where $p=\frac{d}{n}$ where
$d$ is at least some sufficiently large constant. We will assume that $d\leq n^{1/4}$ since Theorem \ref{th1}
covers larger $d$.
\begin{theorem}\label{th2}
Let $p=\frac{d}{n}$ where $d$ is larger than some absolute constant and $d\leq n^{1/4}$.
\begin{description}
\item[(a)] If $\a<\frac{4}{7}$ is a constant
then w.h.p.
$$\chi_g(G_{n,p})\geq \frac{\a d}{\ln d}.$$
\item[(b)] If $\a$ is a sufficiently large constant then
w.h.p.
$$\chi_g(G_{n,p})\leq \frac{\a d}{\ln d}.$$
\end{description}
\end{theorem}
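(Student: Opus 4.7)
The plan is to treat parts (a) and (b) separately, giving Bob a strategy for (a) and Alice a strategy for (b), with the analysis in each case relying on standard whp structural properties of $G = G_{n,d/n}$ for large $d$. Specifically I shall use that whp $\chi(G) = (1+o(1))\, d/(2\ln d)$, the independence number is $(2+o(1))\, n\ln d/d$, every vertex subset $S$ with $|S| \le n/\ln d$ satisfies $e(S) \le (1+o(1))\binom{|S|}{2}p$, and every vertex has degree $O(d)$. Note that the naive bound $\chi_g \ge \chi \sim d/(2\ln d)$ alone is weaker than the target in (a), so Bob must exploit the game structure beyond simply presenting a hard-to-color graph.

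\textbf{Part (a).} Fix $\alpha < 4/7$ and set $k = \lfloor \alpha d/\ln d \rfloor$. Bob's strategy is, on each of his turns, to select a vertex $v$ and color $c$ so as to maximally enlarge the neighborhood $N(V_c)$ of color class $V_c$ relative to the growth in $|V_c|$. A vertex is uncolorable precisely when it lies in $N(V_c)$ for every $c$, so the goal is to drive the neighborhoods of all $k$ color classes towards covering the entire uncolored set as quickly as possible. I would track the joint evolution of $(|V_c|, |N(V_c)|)$ across all colors and use a first-moment calculation on $G_{n,p}$ to argue that as the total colored-vertex count approaches $n$, the expected number of uncolored $v$ covered by $N(V_c)$ for every $c$ exceeds one, so whp some vertex is uncolorable. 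The threshold $\alpha < 4/7$ emerges from optimizing Bob's disruption rate over his $(1-o(1))n/2$ moves against Alice's best growth response (growing the largest available independent class).

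\textbf{Part (b).} Fix $\alpha > 12$ and set $k = \lceil \alpha d/\ln d \rceil$. Call an uncolored vertex $v$ \emph{threatened} at time $t$ if $c_t(v) \ge k - 1$, where $c_t(v)$ is the number of distinct colors on colored neighbors of $v$. Alice's strategy: on her turn, color a threatened vertex if one exists (a legal color for it exists provided the game is not already over); otherwise color an uncolored vertex maximizing $c_t(v)$ with any legal color. To prove correctness I would introduce a suitably-chosen potential $\Phi(t) = \sum_{v\text{ uncolored}} \psi(c_t(v))$, where $\psi$ is increasing and grows fast enough near $k-1$ that (i) Bob's move increases $\Phi$ by at most a controlled quantity, bounded by the degree bound and local-sparsity property (the relevant increment is $\sum_{u \in N(v_{\text{new}})}(\psi(c_t(u)+1)-\psi(c_t(u)))$), and (ii) Alice's priority move decreases $\Phi$ by at least the same amount, since eliminating a threatened vertex subtracts the large value $\psi(k-1)$. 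Concluding, $\Phi$ remains bounded below the critical threshold $\psi(k)$ throughout, so no vertex ever reaches $c_t(v) = k$ and Alice wins.

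\textbf{Main obstacle.} The principal challenge in both parts is converting the informal destruction-versus-repair balance into quantitative inequalities that yield the stated constants. For part (b) this requires a careful calibration of $\psi$ using the bounded-local-density property of $G_{n,d/n}$ to prevent $\Phi$ from compounding beyond Alice's rate of repair; for part (a) it amounts to verifying via union bounds that the vertex-color pair Bob wishes to play at every turn really exists whp in $G_{n,p}$. In both cases the argument must be robust against worst-case play by the opponent, and the constants $4/7$ and $12$ should be viewed as byproducts of these calibrations rather than conjecturally sharp.
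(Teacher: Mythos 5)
Your proposal sketches two strategies that differ from the paper's, but neither is developed to the point where it actually proves the stated bounds, and both have substantive holes that are not merely a matter of filling in routine calculations.

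For part (a), the paper's Bob does \emph{not} greedily maximize blocking; his strategy is to echo Alice's color $i$ on a \emph{uniformly random} vertex of $U_i$ (the $i$-available uncolored set). This deliberate randomness is what makes the analysis go through: it lets the authors upper-bound $\Pr(S_i = S)$ uniformly over Alice's play (via the $Y_t = 1/(X_1 \cdots X_t)$ argument), and then take a union bound over all of Alice's possible half of $C_i$ and all extensions $T$. Your greedy-Bob proposal replaces this with an informal first-moment calculation, but you never explain (i) why Bob's ``maximally enlarge $N(V_c)/|V_c|$'' move exists whp, (ii) how you would bound the probability over Alice's adversarial responses without the random-play decoupling, or (iii) how the specific constant $4/7$ would fall out. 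The paper's $4/7$ comes from the explicit optimization $\theta = 7\alpha/8$, $\beta = 1/2$, $\gamma = 3/4$ in the inequality $(2\beta+\gamma)^2 > 2(\beta+\beta^2+\gamma)$; there is nothing in your argument that ties the greedy-blocking rate to that inequality. Moreover, your strategy for Bob does not obviously constrain Alice's color-class sizes at all — Bob growing some $N(V_c)$ fast does not stop Alice from efficiently growing a \emph{different} class. The paper's trick of having Bob always play Alice's last color is exactly what couples Bob's damage to the class Alice is trying to build.

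For part (b), your potential-function Alice is a genuinely different idea from the paper's two-phase strategy, but the crucial inequality ``Alice's decrease $\geq$ Bob's increase'' is asserted rather than shown, and for a naive $\psi$ it fails. Bob's move can push $c_t(u) \mapsto c_t(u) + 1$ for every uncolored neighbor $u$ of the vertex he colors, of which there can be $\Theta(d)$ and several of which may already sit near the threshold $k-2$; a single Bob move can thus create multiple threatened vertices, and Alice's one repair per turn cannot keep up in the worst case without additional structural input. The paper addresses exactly this by (i) running the greedy ``fewest available colors'' strategy until every uncolored vertex has $\geq \beta/2$ available colors (Lemma \ref{lem5} bounds the bad set $B(\cC)$ to size $\gamma n$), and then (ii) showing whp that the remaining $2\gamma n$ uncolored vertices admit the decomposition $U_0 \supseteq U_1 \supseteq \cdots \supseteq U_\ell$ satisfying P1--P5, so the light edges form a forest and the tree-coloring strategy of \cite{R1} plus the sparse remainder keeps Alice safe. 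Your proposal has no analogue of the sparsity Lemmas \ref{l1}--\ref{l2} or the forest decomposition; without them, nothing controls how fast $\Phi$ compounds. In short, both directions of your proposal are plans that skip the key quantitative lemmas of the paper, and the constants $4/7$ and $12$ are not derived.
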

Note that when $p=o(1)$ we have $\frac{n}{\log_b np}\sim \frac{d}{\ln d}$. Note also that the bounds in
Theorem \ref{th1} are stronger than those in Theorem \ref{th2}, whenever both results are applicable.

It is natural to compare our bounds with the asymptotic behavior of the
ordinary chromatic number of random graph. It is known by the results of
Bollob\'as \cite{Bo} and {\L}uczak \cite{Lu} that when $p=o(1)$,
$\chi(G_{n,p})=(1+o(1))\frac{d}{2\ln d}$ w.h.p.\ (Of course a stronger result is now known,
see Achlioptas and Naor \cite{AN}). Thus Theorem \ref{th2}
shows that the game chromatic number of $G_{n,p}$
is at most (roughly) twelve times and at least (roughly) 8/7 times its chromatic number.

Having proved Theorem \ref{th2}, we extend the results to the random $d$-regular graph $G_{n,d}$.
\begin{theorem}\label{th3}
Let $\e>0$ be an arbitrary constant.
\begin{description}
\item[(a)] If $\a$ is a constant satisfying the conditions of Theorem \ref{th1} or Theorem \ref{th2} where appropriate
and $d$ is sufficiently large and
$d\leq n^{1/3-\e}$ then w.h.p.
$$\chi_g(G_{n,d})\geq \frac{\a d}{\ln d}\, . $$
\item[(b)] If $\a$ is a constant satisfying the conditions of Theorem \ref{th1} or Theorem \ref{th2} where appropriate
and $d$ is sufficiently large and
$d\leq n^{1/3-\e}$ then w.h.p.
\[\chi_g(G_{n,d})\leq \frac{\a d}{\ln d}.\]
\end{description}
\end{theorem}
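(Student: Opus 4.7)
The plan is to transfer the winning strategies underlying Theorems \ref{th1} and \ref{th2} from $G_{n,p}$ with $p=d/n$ to $G_{n,d}$. In each regime covered by Theorem \ref{th3} --- the Theorem \ref{th1} range $d\ge(\ln n)^K$ and the Theorem \ref{th2} range where $d$ is at least a large constant and $d\le n^{1/4}$ --- the $G_{n,p}$ proof supplies a winning strategy for Alice (part (b)) or for Bob (part (a)). In both cases, success hinges only on a short list of structural properties of the host graph: degree concentration, expansion of vertex subsets, absence of small dense subgraphs, local neighborhood statistics, and first- and second-moment counts of certain witness configurations that Bob's strategy exploits or that Alice uses to bound her reactive moves. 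I would isolate these properties, verify them in $G_{n,d}$, and then replay the same strategies.

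For the verification step the main tool is a Kim--Vu-style two-sided sandwich: for $d\le n^{1/3-\e}$ one can couple the distributions so that $G_{n,p_-}\subseteq G_{n,d}\subseteq G_{n,p_+}$ w.h.p., where $p_\pm=(1\pm o(1))d/n$. This sandwich cannot be applied directly to $\chi_g$, which is not monotone under the subgraph order, but it does carry over every monotone structural fact used in the $G_{n,p}$ arguments --- which is essentially everything the strategies need. Facts that are not obviously monotone (for instance, an exact typical count of a witness configuration) can be established via direct first- and second-moment computations in the configuration model, whose leading-order asymptotics agree with those of $G_{n,p}$ throughout the range $d\le n^{1/3-\e}$. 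The exactly $d$-regular degree sequence of $G_{n,d}$ is a bonus here: it eliminates the need for degree-concentration arguments entirely.

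With the structural facts in hand, part (b) is essentially immediate --- Alice simply plays her $G_{n,p}$ strategy. For part (a), Bob plays his $G_{n,p}$ strategy; the essential quantitative input is a w.h.p.\ lower bound on the number of witness configurations, which follows from the second-moment calculation in the configuration model. I expect the main obstacle to be the bookkeeping of these second-moment calculations: in the configuration model the degree constraint introduces mild negative correlations between edge indicators, and one must verify that the resulting correction terms remain of lower order throughout $d\le n^{1/3-\e}$. A secondary wrinkle is that the interval $d\le n^{1/3-\e}$ spans both Theorem \ref{th1} and Theorem \ref{th2}; since the functional form of the bound is the same $\a d/\ln d$ in both, invoking the appropriate parent theorem in each subrange is routine.
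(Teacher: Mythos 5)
Your high-level plan matches the paper's: transfer the structural lemmas that the $G_{n,p}$ strategies rely on into the $G_{n,d}$ setting, then replay those strategies. But two of the specific mechanisms you propose are wrong or incomplete in ways that matter.

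First, the Kim--Vu sandwich is not two-sided in the form you state. The coupling of \cite{KV} gives $H_1\subseteq G\subseteq H_2$ only on the lower side: one gets $H_1\subseteq G$ w.h.p., but $G\subseteq H_2$ fails. The paper has to prove a substitute statement --- Lemma~\ref{lemma:degree}, that $\Delta(G\setminus H_2)=O(1)$ w.h.p.\ --- and then re-examine each monotone-increasing lemma from \cite{BFS} to show that this $O(1)$ leakage does not hurt. Your proposal treats the upper inclusion as given and so silently skips the piece of the argument that the paper spends the most care on.

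Second, the range split at $d=n^{1/4}$ is not bookkeeping but is forced. In the Kim--Vu coupling the relative error is $\delta=\Theta((\ln n/d)^{1/3})$, so for $d$ below $\text{polylog}(n)$ (and in particular for the large-constant $d$ regime of Theorem~\ref{th2}) the sandwich is useless: $p_\pm$ are not $(1\pm o(1))d/n$. The paper therefore uses the configuration model directly for all of $d_0\le d\le n^{1/4}$, with the key device being $\Pr(G_F\text{ simple})\ge e^{-2d^2}$, so any event of probability $o(e^{-2d^2})$ in $G_{n,d/n}$ still has probability $o(1)$ in $G_{n,d}$; for the few small sets where even that is too lossy, a separate switching estimate (Lemma~\ref{eq6}) is needed. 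This is categorically different from patching a couple of non-monotone counts by a second-moment computation, which is how you describe the configuration-model role. Related to this, the paper's lower-bound transfer (Section~\ref{Gnd2}) is just the first-moment union bound \eqref{f1a} multiplied by $e^{2d^2}$ --- there is no second-moment witness count in part (a). Finally, your remark that exact regularity ``eliminates degree-concentration arguments entirely'' is only a convenience; the real degree issue is controlling $G\setminus H_2$, which regularity alone does not resolve.
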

It is known by the result of
Frieze and {\L}uczak \cite{FL} that w.h.p.\
$\chi(G_{n,d})=(1+o(1))\frac{d}{2\ln d}$. (Of course stronger results are now known,
see Achlioptas and Moore \cite{AM} and Kemkes, P\'eres-Gim\'enez and Wormald \cite{KPW}).

Theorem \ref{th3} says nothing about $\chi_g(G_{n,d})$ when $d$ is small. We have been able to prove
\begin{theorem}\label{th4}
If $d=3$ then w.h.p.\ $\chi(G_{n,d})=4$.
\end{theorem}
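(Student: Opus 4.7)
The upper bound $\chi_g(G_{n,3})\leq 4$ is immediate and deterministic: for any graph $G$ with $\Delta(G)\leq 3$, in the game with $4\geq\Delta+1$ colors every uncolored vertex at every stage has at most $3$ colored neighbors and hence at least one of the $4$ colors available, so the game always completes and Alice wins. Since $G_{n,3}$ has maximum degree $3$, this gives $\chi_g(G_{n,3})\leq 4$ always, with no probabilistic content needed.

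The substance lies in the matching lower bound $\chi_g(G_{n,3})\geq 4$. The plan is to exhibit, w.h.p., a winning strategy for Bob with only $3$ colors by finding many disjoint copies of a small ``trap'' subgraph $H$ in $G_{n,3}$ and running a local attack on a fresh copy. A natural candidate for $H$ is the depth-$2$ rooted tree in which a root $v_H$ has three children $a_1,a_2,a_3$ and each $a_i$ has two further neighbors $b_i,c_i$ inside $H$; note that $v_H$'s whole neighborhood lies in $H$, while the six grandchildren $b_i,c_i$ carry the ``external'' edges of $H$ into the rest of the graph. A straightforward first-moment computation in the configuration model (essentially the calculation that most balls of radius $2$ in $G_{n,3}$ are trees) gives that w.h.p.\ $G_{n,3}$ contains $\Omega(n)$ pairwise vertex-disjoint induced copies of $H$.

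Bob's global strategy is then as follows. At the start he earmarks a collection $\mathcal{H}$ of $\Omega(n)$ pairwise vertex-disjoint copies of $H$. Whenever it is Bob's turn he picks a copy $H^\star\in\mathcal{H}$ none of whose vertices has yet been colored (by either player) and plays the next move of the local trap strategy inside $H^\star$; any copy that Alice intrudes upon is simply discarded. The goal of the local strategy is to force the three children $a_1^\star,a_2^\star,a_3^\star$ of $v_{H^\star}$ to carry the three distinct colors before $v_{H^\star}$ itself is coloured, at which point Bob wins. Since each $H^\star$ can be spoiled by only a bounded number of Alice-moves and the total game length is at most $n$ while $|\mathcal{H}|=\Omega(n)$, Bob is guaranteed a fresh $H^\star$ every time he needs one.

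The main obstacle---and the technical heart of the proof---is establishing the local winning strategy on a single fresh copy of $H$ while allowing Alice to ``pass'' by playing outside it. The naive plan (Bob colours $b_i$ with color $2$ to threaten $a_i$) is thwarted by Alice's mirror: she colours $c_i$ with the same color $2$, so that $a_i$ retains two options and cannot be trapped via this branch. Defeating the mirror will require either enriching $H$ (e.g.\ extending to depth $3$ so Bob has additional forcing moves below each $b_i,c_i$), or exploiting multiple potential trap vertices inside $H$ so Alice cannot defend them all, or a parity/tempo argument using the asymmetry of who moves first. Verifying this local winning strategy---presumably via a careful case analysis of Alice's responses on the chosen $H$---is the crux; once it is in hand, the global reduction through the many disjoint copies of $H$ is routine first-moment material, and the two bounds combine to give $\chi_g(G_{n,3})=4$ w.h.p.
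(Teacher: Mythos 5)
Your upper bound is correct and matches the paper exactly: $\chi_g(G) \le \Delta(G)+1$ for any graph, so $\chi_g(G_{n,3}) \le 4$ deterministically. (The theorem statement has an evident typo --- it should read $\chi_g$, not $\chi$ --- and you correctly read through it.)

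For the lower bound, however, there is a genuine gap, and you admit as much: you never exhibit a local winning strategy for Bob. Your proposed trap $H$ --- a root with three children, each having two further neighbors --- is a tree, and you correctly observe that Alice's ``mirror'' defense defeats the naive forcing plan on it. The paper's $H$ is fundamentally different: it is a \emph{theta graph}, two vertices $v,w$ joined by three internally disjoint paths of even length, each path ``long'' (no vertex close to both endpoints), with no short paths between $H$-vertices outside $H$. The cycle structure is essential. Bob colors $v$, selects the even cycle formed by the two paths that Alice's response cannot reach, and then marches around it: on his $i$-th subsequent move he colors $v_{2i}$ a color different from $v_{2i-2}$, threatening $v_{2i-1}$. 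Each such ``forcing move'' constrains Alice to play close to $v_{2i-1}$; by the separation properties these forced responses cannot interact, and because the cycle is even the final move $v_{2k-2}$ threatens both $v_{2k-3}$ and $v_{2k-1}$ simultaneously --- a double threat Alice cannot parry. A tree has no way to close the loop and manufacture this double threat.

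Your global reduction is also flawed, independently of the local strategy. You propose Bob earmark $\Omega(n)$ disjoint copies, discard any copy Alice touches, and attack a fresh one. But if Bob's local attack takes several consecutive moves, Alice can simply touch whichever copy Bob is currently working on; Bob then discards it, having accomplished nothing, and after $O(n)$ rounds the game ends with all vertices colored. The paper avoids this by proving that Alice's responses are \emph{forced}: to parry a forcing move at $v_{2i-1}$ she must play close to $v_{2i-1}$, and Properties 3 and 4 guarantee these neighborhoods are disjoint and disjoint from the rest of $H$, so Alice's ``intrusions'' never derail the attack. Consequently one copy of $H$ suffices, and the existence argument (in the contiguous Hamilton-cycle-plus-random-matching model, finding chord-free segments of length $\Theta(\sqrt{n})$ and pairing them) is the real probabilistic work, not a first-moment count of bounded-size trees.
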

It is easy to see via Brooks' theorem that w.h.p.\ the chromatic number of a random cubic graph is three
and so Theorem \ref{th4} separates $\chi$ and $\chi_g$ in this context.

We often refer to the following Chernoff-type bounds for the tails of binomial
distributions (see, e.g., \cite{AS} or \cite{JLR}). Let $X=\sum_{i=1}^n X_i$ be a sum of
independent indicator random variables such that $\Pr(X_i=1)=p_i$ and let
$p=(p_1+\cdots+p_n)/n$. Then
\begin{align}
\Pr(X\leq (1-\e)np)&\leq e^{-\e^2np/2},\label{chl}\\
\Pr(X\geq (1+\e)np)&\leq e^{-\e^2np/3},\qquad\e\leq 1,\label{chu1}\\
\Pr(X\geq \m np)&\leq (e/\m)^{\m np}.\label{chu2}
\end{align}
\subsection{Outline of the paper}
Section \ref{Gnp} is devoted to the proof of Theorem \ref{th2}. In Section \ref{Gnplow}, we prove a lower bound on
$\chi_g(G_{n,p})$ by giving a strategy for player B.
Basically, B's strategy is to follow A coloring a vertex with color $i$ by coloring a random vertex $v$ with color $i$.
Of course we mean here that $v$ is randomly chosen from vertices outside of the neighborhood of the set of vertices
of color $i$. Why does this work? Well, it is known that choosing an independent set via a greedy algorithm will
w.h.p.\ find an independent set that is about one half the size of the largest independent set. What we show is
that choosing randomly half the time also has a deleterious effect on the size of the independent set (color class)
selected. This leads to the game chromatic number being significantly larger than the chromatic number.

In Section \ref{Gnphigh}, we prove an upper bound on $\chi_g(G_{n,p})$ by giving a strategy for player A.
Here A follows the same strategy used in the proof of Theorem \ref{th1}(b), up until close to the end.
We then let A follow a more sophisticated strategy. A's initial strategy is to choose a vertex with as few
``available'' colors and color it with any available color i.e. one that does not conflict with its colored
neighbors. At a certain point there are few uncolored vertices and they all have a substantial number of
available colors. We show that the edges of the graph induced by these vertices can be partitioned into a forest
$F$ plus a low degree subgraph. Using the tree coloring strategy described in \cite{R1} we see that the low
degree subgraph does not prevent $G$ from being colored.

Having proved Theorem \ref{th2} we transfer the results to random $d$-regular graphs ($d\leq n^{1/4}$)
by showing that the underlying
structural lemmas remain true or trivially modified. This is done in Section \ref{Gnd}.

In Section \ref{Gnd} we show how to convert Theorem \ref{th1} into a random regular
graph setting. The two ranges 
$d_0\leq d\leq n^{1/4}$ and $n^{1/4}<d\leq n^{1/3-\e}$ are treated seperately. The lower range is treated in Section
\ref{Gnd1} and the upper range is treated in Section \ref{usingKV} using the ``Sandwiching Theorem'' of Kim and Vu \cite{KV}. 

In Section \ref{Gnd=3} we provide a strategy for B showing that w.h.p.\ $\chi_g(G_{n,3}) = 4$.
This proves Theorem~\ref{th4}. B's strategy is based on his ability to force A into playing on a small
set of vertices. B will then make a sequence of such forcing moves along a cycle to create a double threat and
win the game.

\section{Theorem \ref{th2}: $G_{n,p},p=d/n$}\label{Gnp}
\subsection{The lower bound} \label{Gnplow}
Let $D=\frac{d}{\ln d}$ and suppose that there are $k=\a D$ colors.
At any stage, let $C_i$ be the set of vertices that have been colored $i$ and let $C=\bigcup_{i=1}^kC_i$.
Let $U=[n]\setminus C$ be the set of uncolored vertices and let $U_i=U\setminus N(C_i)$.
Note that $[n]=\set{1,2,\ldots,n}$ is the vertex set of $G_{n,p}$.

B's strategy will be to choose the same color $i$ that A just chose and then to assign color $i$ to a random vertex in
$U_i$. The idea being that making random choices when constructing an independent set (color class) tends
to only get one of half the maximum size. A could be making better choices and so we do not manage to prove that
we need twice as many colors as the chromatic number.

Suppose that we run this process for $\th n$ rounds and that $|C_i|=2\b n/D$ where we will
later take $\th=7\a/8<1/2$ and $\b=1/2$. Let $S_i$ be the set of $\b n/D$
vertices in $C_i$ that were colored by B. We consider the probability that there exists a set $T$ of
size $\g n/D$ such that $C_i\cup T$ is independent. For expressions $X,Y$ we use the notation $X\leb Y$
in place of $X=O(Y)$ when the
bracketing is ``ugly''.
\begin{align}
\Pr(\exists C_i,T)&\leb k\binom{n}{\b n/D}\binom{n}{\g n/D}\sum_{|S|=\b n/D}
\Pr(S_i=S)(1-p)^{(2\b+\g)^2n^2/2D^2}\label{f0}\\
&\leq k\binom{n}{\b n/D}\binom{n}{\g n/D}\sum_{|S|=\b n/D}(\b n/D)!\prod_{j=1}^{\b n/D}\frac{7}{(1-p)^{2j}(1-2\th)n}
(1-p)^{(2\b+\g)^2n^2/2D^2}\label{f1}\\
&\leq k\binom{n}{\b n/D}^2\binom{n}{\g n/D}\frac{(\b n/D)!}{((1-2\th)n)^{\b n/D}}\frac{7^{\b n/D}}{(1-p)^{\b^2n^2/D^2}}
(1-p)^{(2\b+\g)^2n^2/2D^2}\nonumber\\
&\leq k\brac{\bfrac{eD}{\b}^{\b}\cdot \bfrac{7}{1-2\th}^\b\cdot\bfrac{eD}{\g}^{\g}\cdot
\exp\set{(2\b^2-(2\b+\g)^2)d/2D}}^{n/D}\nonumber\\
&=k\exp\set{(\b+\g+\b^2-(2\b+\g)^2/2+o_d(1)))d^{-1}n\ln^2 d}\label{f1a}\\
&=o(1)\nonumber
\end{align}
if $(2\b+\g)^2>2(\b+\b^2+\g)$. This is satisfied when $\b=1/2$ and $\g=3/4$. We will justify \eqref{f0} and \eqref{f1}
momentarily.

If the event $\set{\exists C_i,T}$ does not occur then
because no color class has size greater than $(2\b+\g)n/D$
the number $\ell$ of colors $i$ for which $|S_i|\geq \b n/D$ by this time satisfies
$$\frac{\ell(2\b+\g)}{D}+\frac{2(k-\ell)\b}{ D}\geq 2\th.$$
We choose $\th=7\a/8$. Since $k\geq \ell$,
this implies that
$$\frac{k}{D}\geq \frac{2\th}{2\b+\g}=\a.$$
This completes the proof of Part (a) of Theorem \ref{th2}.

{\bf Justifying \eqref{f0}:} Here we are taking the union bound over all $\binom{n}{\b n/D}\binom{n}{\g n/D}$ possible
choices of $C_i\setminus S_i$ and $T$.
In some sense we are allowing player A to simultaneously choose all possible
sets of size $\b n/D$ for $C_i\setminus S_i$. The union bound shows that w.h.p.\ all choices fail.
We do not sum over orderings of $C_i\setminus S_i$. We instead compute an upper bound on $\Pr(S_i=S)$ that holds
regardless of the order in which A plays. We consider the situation after $\theta n$ rounds.
That is, we think of the following random process: pick a graph $G \sim G(n,p)$, let Alice play the coloring game
on $G$ with $k$ colors against a player who randomly chooses an available vertex to be colored by the same color as
Alice. Stop after $\theta n$ moves. At this point Alice played with color $i$ and there are $\b n/D$ vertices that were colored
$i$ by Alice and the same number that were colored $i$ by Bob. We bound the probability that at this point there are $\g n/D$ vertices
that form an independent set with the $i$'th color class. We take a union bound over all the possible sets for Alice's vertices
and for the vertices in $T$. The probability of Bob choosing a certain set is computed below.

{\bf Justifying \eqref{f1}:}
For this we first consider a sequence of random variables
$$X_1=N=(1-2\th)n, X_j=\Bin(X_{j-1},q)\text{ where }q=(1-p)^2\text{ and }1\leq j\leq t.$$
$X_j$ is a lower bound for the number of vertices Bob can color $i$. The probability that a vertex was $i$-available at
time $j-1$ and is still $i$-available now is $(1-p)^2$. This is because two more vertices have been colored $i$.
Also, we take $X_1=N$ as a lower bound on the number of choices at the start of the process.
Then we
estimate $\E(Y_t)$ where
$$Y_t=\begin{cases}
       0&X_t=0\\\frac{1}{X_1X_2\cdots X_t}&X_t>0
      \end{cases}
$$
We use $Y_{\b Dn}$ as an upper bound on the probability that B's sequence of choices is\\
$x_1,x_2,\ldots,x_{\b n/D}$ where $S=\set{x_1,x_2,\ldots,x_{\b n/D}}$. The term $X_j$ lower bounds
the number of choices that B has and so $1/X_j$ upper bounds the probability that B chooses $x_j$. We take the expectation
of the product of these bounds over $G_{n,p}$.

Now if $B=Bin(\n,q)$ and we take $\prod_{i=1}^k\frac{1}{B+i-1}=0$ when $B=0$ then
\begin{align}
\E\brac{\prod_{i=1}^k\frac{1}{B+i-1}}&=
\sum_{\ell=1}^\n\prod_{i=1}^k\frac{1}{\ell+i-1}\binom{\n}{\ell}q^\ell(1-q)^{\n-\ell} \nonumber\\
&=\frac{1}{q^k}\prod_{i=1}^k\frac{1}{\n+i}\sum_{\ell=1}^{\n} \frac{\ell+k}{\ell}\binom{\n+k}{\ell+k}q^{\ell+k}(1-q)^{\n-\ell}\nonumber \\
&\leq\brac{\frac{1}{q^k}\prod_{i=1}^k\frac{1}{\n+i}}\brac{1+\sum_{\ell=1}^{\n} \frac{k}{\ell}\binom{\n+k}{\ell+k}q^{\ell+k}(1-q)^{\n-\ell} }.\label{pros}
\end{align}
Suppose now that $q=1-o(1)$. Then
\begin{align*}
\sum_{\ell=1}^{\n} \frac{k}{\ell}\binom{\n+k}{\ell+k}q^{\ell+k}(1-q)^{\n-\ell} &\leq 
\sum_{\ell=1}^{k/2} \frac{k}{\ell}\binom{\n+k}{\ell+k}q^{\ell+k}(1-q)^{\n-\ell}
+2\sum_{\ell=1}^{\n} \binom{\n+k}{\ell+k}q^{\ell+k}(1-q)^{\n-\ell}\\
&\leq ke^{-(\n+k)/10}+2\\
&\leq 6.
\end{align*}
Going back to \eqref{pros} we see that 
$$\E\brac{\prod_{i=1}^k\frac{1}{B+i-1}}\leq \frac{7}{q^k}\prod_{i=1}^k\frac{1}{\n+i}.$$
It follows that
\begin{align*}
&\E\bfrac{1}{X_1\cdots X_t}\\
\leq &\E\bfrac{7}{X_1\cdots X_{t-1}(X_{t-1}+1)q}\\
\leq &\E\bfrac{7^2}{X_1\cdots X_{t-2}(X_{t-2}+1)(X_{t-2}+2)q^{1+2}}\\
\vdots\\
\leq & \frac{7^t}{N(N+1)\cdots(N+t)q^{1+2+\cdots+t}}.
\end{align*}
\subsection{The upper bound}\label{Gnphigh}
We begin by proving some simple structural properties of $G_{n,p}$.
\begin{lemma}\label{l1}
If $\th>1$ and
\beq{eq5}
\bfrac{\s ed}{2\th}^\th\leq \frac{\s}{2e}
\eeq
then w.h.p.\
there does not exist $S\subseteq [n],|S|\leq \s n$ such that $e(S)\geq \th |S|$.
\end{lemma}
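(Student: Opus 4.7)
The plan is a routine first-moment / union-bound argument. The event $\{\exists S,|S|=s,\;e(S)\geq\theta s\}$ is vacuous when $\binom{s}{2}<\theta s$, so the relevant range is $2\theta+1\leq s\leq \sigma n$. For each such $s$ I union-bound over the $\binom{n}{s}$ choices of $S$ and for a fixed $s$-set bound
$$\Pr\bigl(e(S)\geq\theta s\bigr)\leq \binom{\binom{s}{2}}{\theta s}p^{\theta s}.$$
Using $p=d/n$, $\binom{n}{s}\leq(en/s)^s$ and $\binom{\binom{s}{2}}{\theta s}\leq(es/(2\theta))^{\theta s}$, the combined per-$s$ contribution rearranges into $h(s)^s$, where
$$h(s)\;=\;\frac{en}{s}\left(\frac{esd}{2\theta n}\right)^{\theta}\;=\;e^{1+\theta}\,d^{\theta}\,(2\theta)^{-\theta}\,(s/n)^{\theta-1}.$$

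Since $\theta>1$, $h$ is strictly increasing in $s$, so its supremum on the range of interest is attained at $s=\sigma n$, where
$$h(\sigma n)\;=\;\frac{e}{\sigma}\left(\frac{\sigma ed}{2\theta}\right)^{\theta}.$$
This is exactly $(e/\sigma)$ times the left-hand side of the hypothesis \eqref{eq5}, which therefore gives $h(\sigma n)\leq 1/2$ and, by monotonicity, $h(s)^s\leq 2^{-s}$ uniformly for $s\in[2\theta+1,\sigma n]$.

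The only mildly delicate point is that $\sum_s 2^{-s}$ is a constant, not $o(1)$, so the pointwise bound alone will not close the union bound. I would handle this by fixing a large constant $K$ and splitting: the tail $\sum_{s>K}2^{-s}=2^{-K}$ can be made smaller than any prescribed $\varepsilon/2$ by choice of $K$ first, while the head $\sum_{2\theta+1\leq s\leq K}h(s)^s$ consists of only finitely many terms, each of which carries an explicit factor $(s/n)^{(\theta-1)s}=o(1)$ as $n\to\infty$ because $\theta-1>0$. Sending $n\to\infty$ with $K$ fixed then makes the head $<\varepsilon/2$, so the full union bound is $o(1)$.

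There is no deep obstacle in this argument; the two things to get right are (i) that $\theta>1$ is precisely the condition that makes $h$ monotone, so the hypothesis at the single endpoint $s=\sigma n$ controls the entire range, and (ii) the head/tail split that upgrades the uniform pointwise bound $2^{-s}$ (which by itself is only $O(1)$-summable) into an $o(1)$ total, exploiting the extra $(s/n)^{(\theta-1)s}$ factor available at constant-sized $s$.
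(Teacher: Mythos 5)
Your argument is essentially the paper's: the same first-moment union bound with $\binom{n}{s}\binom{\binom{s}{2}}{\theta s}(d/n)^{\theta s}$, rearranged to a per-$s$ term of the form $h(s)^s$, controlled by hypothesis \eqref{eq5}. What you add is useful bookkeeping: you make explicit that $\theta>1$ is what makes $h$ increasing, so the single endpoint hypothesis controls the whole range; and you split head/tail to turn the uniform bound $h(s)^s\leq 2^{-s}$, which is only $O(1)$-summable, into an $o(1)$ total. The paper compresses this into the single line ``$=O(d^\theta/n^{\theta-1})=o(1)$ provided $d=o(n^{1-1/\theta})$,'' so you are unpacking rather than departing.

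One small imprecision to tighten, though: in bounding the head you say each term ``carries an explicit factor $(s/n)^{(\theta-1)s}=o(1)$,'' but $h(s)^s$ also carries the factor $d^{\theta s}$, and in this paper $d$ is allowed to grow with $n$ (e.g.\ $d\leq n^{1/4}$). The correct statement is that for fixed $s$ the term is $O\bigl((d^\theta/n^{\theta-1})^s\bigr)$, which is $o(1)$ precisely when $d=o(n^{1-1/\theta})$. The paper states this side condition explicitly at the end of the proof (and verifies it against its working range $\theta\geq 2-\varepsilon$, $d\leq n^{1/4}$), whereas your write-up silently drops the $d^\theta$ factor. You should add that condition; without it the head terms need not vanish.
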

\proofstart
\begin{align}
\Pr(\exists S: |S|\leq \s n\text{ and }e(S)\geq \th |S|)&\leq \sum_{s=2\th}^{\s n}\binom{n}{s}
\binom{\binom{s}{2}}{\th s}\bfrac{d}{n}^{\th s}\label{3}\\
&\leq \sum_{s=2\th}^{\s n}\brac{\frac{ne}{s}\bfrac{eds}{2\th n}^{\th}}^s\nonumber\\
&=\sum_{s=2\th}^{\s n}\brac{e\bfrac{s}{n}^{\th-1}\bfrac{ed}{2\th}^{\th}}^s\label{2}\\
&=O\bfrac{d^\th}{n^{\th-1}}=o(1)\nonumber
\end{align}
provided $d=o(n^{1-1/\th})$.
\proofend

We will apply this lemma with $\th\geq 2-\e$ for $\e\ll1$ and this fits with our bound on $d$.
\begin{lemma}\label{l2}
Let $\s,\th$ be as in Lemma \ref{l1}. If $(\D-2\th)\t>1$ and
$$\bfrac{\s ed}{(\D-2\th)\t}^{(\D-2\th)\t}\leq\frac{\s}{4e}$$
then w.h.p.\ there do not exist $S\supseteq T$ such that
$|S|=s\leq \s n,|T|\geq \t s$ and $d_S(v)\geq \D$ for every $v\in T$.
\end{lemma}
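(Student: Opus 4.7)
The plan is to combine Lemma~\ref{l1} with a straightforward first-moment argument. Write $\rho=(\Delta-2\theta)\tau$ for brevity.

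Condition on the (high-probability) event that the conclusion of Lemma~\ref{l1} holds, so that every $U\subseteq[n]$ with $|U|\le\sigma n$ satisfies $e(U)<\theta|U|$. Suppose $(S,T)$ witnesses the failure of Lemma~\ref{l2}. Since $|T|\le|S|\le\sigma n$, Lemma~\ref{l1} applied to $T$ gives $e(T)<\theta|T|$. Combining this with the identity $\sum_{v\in T}d_S(v)=2e(T)+e(T,S\setminus T)$ and the assumption $d_S(v)\ge\Delta$ for every $v\in T$, one deduces
$$e(T,S\setminus T)\ge(\Delta-2\theta)|T|\ge\rho s.$$
Hence it is enough to show that w.h.p.\ no pair $(S,T)$ with $T\subseteq S\subseteq[n]$, $|S|=s\le\sigma n$, $|T|=\lceil\tau s\rceil$, and at least $\rho s$ edges between $T$ and $S\setminus T$ exists.

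A union bound controls this probability by
$$\sum_{s\le\sigma n}\binom{n}{s}\binom{s}{\lceil\tau s\rceil}\binom{\tau s(1-\tau)s}{\lceil\rho s\rceil}p^{\rho s}.$$
Applying the standard estimate $\binom{a}{b}\le(ea/b)^b$ to each factor, setting $p=d/n$, and collecting powers of $s$ rewrites the summand as $[A(s/n)^{\rho-1}]^s$, where $A$ depends only on $e,\sigma,\tau,d,\Delta,\theta$. The hypothesis $(\sigma ed/\rho)^\rho\le\sigma/(4e)$ is calibrated so that the bound $(s/n)^{\rho-1}\le\sigma^{\rho-1}$ produces a base at most a constant strictly less than $1$; the extra factor of $\tau$ in the denominator on the left-hand side of the hypothesis is exactly what absorbs the $\binom{s}{\lceil\tau s\rceil}\le(e/\tau)^{\tau s}$ term in the bookkeeping. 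Thus at $s=\sigma n$ the summand is exponentially small, while for smaller $s$ the full factor $(s/n)^{s(\rho-1)}$ (which is decaying because $\rho>1$) provides polynomial-in-$n$ decay, so the total sum is $o(1)$.

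The only point of care is the arithmetic inside the union bound. Conceptually, the sole new idea is that Lemma~\ref{l1} is applied to $T$ itself (not just to $S$), which is what replaces the trivial denominator $\Delta$ that one would see from $2e(S)\ge\sum_{v\in T}d_S(v)\ge\Delta|T|$ by the sharper $\Delta-2\theta$, and this improvement is what makes the hypothesis of Lemma~\ref{l2} usable in the applications later in the paper.
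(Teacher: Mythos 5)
Your proposal is correct and follows essentially the same route as the paper's: apply Lemma~\ref{l1} to the set $T$ (not $S$), conclude that $e(T:S\setminus T)\ge(\D-2\th)\t s$, and then run a first-moment union bound over pairs $(S,T)$ and over candidate edge sets of size $\lceil\rho s\rceil$ between $T$ and $S\setminus T$. The only cosmetic difference is that the paper bounds $\binom{s}{t}\le 2^s$ rather than $(e/\t)^{\t s}$ and sums over all $t\ge\t s$ rather than fixing $t=\lceil\t s\rceil$, but the slack built into the hypothesis (the factor $4e$) accommodates either choice, so the bookkeeping goes through identically.
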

\proofstart
In the light of Lemma \ref{l1}, the assumptions imply that w.h.p.\ $|e(T:S\setminus T)|\geq (\D-2\th)\t s$. In which case,
\begin{align}
&\Pr(\exists S\supseteq T,\,|S|\leq \s n,\,|T|\geq \t s: |e(T:S\setminus T)|\geq (\D-2\th)\t s)\nonumber\\
&\leq\sum_{s=2\th}^{\s n}\sum_{t=\t s}^s\binom{n}{s}\binom{s}{t}
\bfrac{edt}{(\D-2\th)\t n}^{(\D-2\th)\t s}\label{5}\\
&\leq \sum_{s=2\th}^{\s n}\sum_{t=\t s}^s\bfrac{ne}{s}^s\cdot 2^s\cdot
\bfrac{eds}{(\D-2\th)\t n}^{(\D-2\th)\t s}\nonumber\\
&=\sum_{s=2\th}^{\s n}\sum_{t=\t s}^s\brac{\frac{2ne}{s}\cdot
\bfrac{eds}{(\D-2\th)\t n}^{(\D-2\th)\t }}^s\nonumber\\
&=\sum_{s=2\th}^{\s n}\sum_{t=\t s}^s\brac{2e\bfrac{s}{n}^{(\D-2\th)\t -1}\cdot
\bfrac{ed}{(\D-2\th)\t}^{(\D-2\th)\t }}^s\label{4}\\
&=O\bfrac{d^{(\D-2\th)\t}}{n^{(\D-2\th)\t-1}}=o(1).\nonumber
\end{align}
\proofend

We will apply this lemma with $(\D-2\th)\t\geq 2$ and this fits with our bound on $d$.

Fix $\a>12$ and let
$$k=\frac{\a d}{\ln d}\text{ and }\b=\frac{\a d^{1-1/\a}}{\ln d}\text{ and }\g=\frac{16\ln^2d}{\a d^{1-1/\a}}.$$
We will now argue that w.h.p.\ A can win the game if $k$ colors are available.

A's initial strategy will be the same as that described in \cite{BFS}. Let $\cC=(C_1,C_2,\ldots,C_k)$ be a collection of pairwise disjoint subsets of $[n]$, i.e. a (partial) coloring. Let $\bigcup\cC$ denote $\bigcup_{i=1}^k C_i$. For a vertex $v$ let 
\[A(v,\cC)= \set{i\in [k]:\;v\mbox{ is not adjacent to any vertex of }C_i}, \]
and set
$$a(v,\cC)=|A(v,\cC)|.$$
Note that \( A(v, \cC) \) is the set of colors that are available at vertex \(v\) when the partial coloring is given by the sets in \( \cC \) and \( v \not\in \bigcup \cC\). A's initial strategy can now be easily defined. Given the current color classes $\cC$, A chooses an uncolored vertex $v$ with the smallest value of $a(v,\cC)$ and colors it by any available color.

As the game evolves, we let \(u\) denote the number of uncolored vertices in the graph.  So, we think of \(u\) as running ``backward'' from \( n \) to \(0\).

We show next that w.h.p.\ every $k$-coloring (proper or improper) of the full vertex set has the property that there are at most $\g n$ vertices with less than $\b/2$ available colors. Let
$$B(\cC)=\set{v:\;a(v,\cC)< 3\b/4}.$$
\begin{lemma}\label{lem5}
W.h.p., for all collections $\cC$,
$$|B(\cC)|\leq \g n.$$
\end{lemma}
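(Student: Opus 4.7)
The plan is a two-level union bound, first over the candidate bad set $B$ (of size $\gamma n$) and then over the candidate coloring $\cC$ (viewed as a function $[n]\to\{0,1,\ldots,k\}$, giving at most $(k+1)^n$ choices). For each fixed pair $(B,\cC)$ with $B\cap\bigcup\cC=\emptyset$, I bound $\Pr[B\subseteq B(\cC)]$ using independence in $G_{n,p}$.

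Fix such a pair and $v\in B$. Writing $X_v=\sum_{i=1}^k\one\{N(v)\cap C_i=\emptyset\}$, we have $v\in B(\cC)$ iff $X_v<\beta/2$. Since the $C_i$ are pairwise disjoint and $v\notin\bigcup\cC$, the indicators are independent across $i$ with $\Pr[\one\{N(v)\cap C_i=\emptyset\}=1]=(1-p)^{|C_i|}$. Convexity of $x\mapsto(1-p)^x$ together with $\sum_i|C_i|\le n$ gives, via Jensen,
\[
\E X_v\;=\;\sum_{i=1}^k(1-p)^{|C_i|}\;\ge\;k(1-p)^{n/k}\;=\;(1-o(1))kd^{-1/\alpha}\;=\;(1-o(1))\beta,
\]
where the middle equality uses $p=d/n$, $k=\alpha d/\ln d$, and $d\le n^{1/4}$. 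A Chernoff lower-tail bound \eqref{chl} then yields
\[
\Pr[X_v<\beta/2]\;\le\;\exp\!\bigl(-(1-o(1))\beta/8\bigr),
\]
uniformly in $\cC$.

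For distinct $v,v'\in B$, the variables $X_v,X_{v'}$ depend on disjoint edge sets (edges from $v$ resp.\ $v'$ into $\bigcup\cC$), so the events $\{X_v<\beta/2\}$ are independent across $v\in B$. Recalling $\beta\gamma=16\ln d$ and $|B|=\gamma n$, this gives
\[
\Pr[B\subseteq B(\cC)]\;\le\;\exp\!\left(-(1-o(1))\frac{\beta\gamma n}{8}\right)\;=\;\exp\bigl(-(2-o(1))n\ln d\bigr).
\]
Now union-bounding over the $\binom{n}{\gamma n}$ sets $B$ and the at most $(k+1)^n$ colorings $\cC$, and using $\ln(k+1)\le(1+o(1))\ln d$ together with $\ln\binom{n}{\gamma n}\le\gamma n\ln(e/\gamma)=o(n\ln d)$ (because $\gamma\ln d\to0$), the total is $\exp(-(1-o(1))n\ln d)=o(1)$.

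The main technical step is the convexity argument that pins $\E X_v\ge(1-o(1))\beta$ uniformly over the adversary's choice of class sizes: the worst case is nearly equal classes of size $n/k$, which is exactly where the ``expected missed color count'' comes out to $\beta$. Everything else is bookkeeping, and it is reassuring that the Chernoff saving $\beta\gamma n/8=2n\ln d$ beats the log coloring count $n\ln(k+1)\sim n\ln d$ with a factor of two to spare.
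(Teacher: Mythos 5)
Your proof is correct and follows essentially the same route as the paper's: the Jensen lower bound $\E\,a(v,\cC)\ge k(1-p)^{n/k}=(1-o(1))\beta$, a per-vertex Chernoff lower-tail estimate, independence across $v\in B$ because the relevant edge sets into $\bigcup\cC$ are disjoint, and a union bound over colorings and candidate bad sets. Only cosmetic constants differ (e.g.\ $e^{-\beta/8}$ versus the paper's $e^{-\beta/9}$, and $(k+1)^n$ versus $k^n$), and the final comparison $\beta\gamma n/8=2n\ln d$ dominating $n\ln k+\ln\binom{n}{\gamma n}\sim n\ln d$ is the same bookkeeping the paper carries out in display \eqref{1}.
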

\proofstart
We first note that if $|S|=\g n$ then w.h.p. $S$ contains at most $4\g^2dn$ edges. This follows from Lemma \ref{l1} with $\s=\g$ and $\th=4\g d$. It follows that for any $\e>0$ that there is a set $S_1\subseteq S$ of size at least $(1-\e)\g n$ such that if $v\in S_1$ then its degree $d_S(v)$ in $S$ is at most $8\e^{-1}\g d$.

Fix $\cC$ and suppose that $v\in S_1$. Let 
$$b(v,\cC)=\card{\set{i\in [k]:\;v\mbox{ is not adjacent to any vertex of }C_i\setminus S}}.$$ 
Thus $a(v,\cC)\geq b(v,\cC)-8\e^{-1}\g d$. $b(v,\cC)$ is the sum of independent indicator variables $X_i$, where $X_i=1$ if $v$ has no neighbors in $C_i\setminus S$ in $G_{n,p}$. Then $\Pr(X_i=1)\geq (1-p)^{|C_i|}$ and since $(1-p)^t$ is a convex function of $t$ we have
\begin{eqnarray*}
\E(b(v,\cC))&\geq&\sum_{i=1}^k(1-p)^{|C_i|}\\
&\geq&k(1-p)^{(|C_1|+\cdots+|C_k|)/k}\\
&\geq& k(1-p)^{n/k}\\
&=&\b-o(\b).
\end{eqnarray*}
It follows from the Chernoff bound \eqref{chl} that
$$\Pr(b(v,\cC)\leq 0.51\b)\leq e^{-\b/33}.$$
Now, when $\cC$ is fixed, the events $\set{b(v,\cC)\leq 0.51\b},v\in S_1$ are independent. Thus, because $a(v,\cC)\leq \b/2$ implies that $b(v,\cC)\leq 0.51\b$ we have 
\begin{align}
&\Pr(\exists\cC:|B(\cC)|\geq \g n)\nonumber\\
&\leq k^n\binom{n}{(1-\e)\g n}e^{-(1-\e)\g\b n/33}\nonumber\\
&\leq d^n\brac{\frac{e}{(1-\e)\g}\exp\set{-\frac{\a d^{1-1/\a}}{33\ln d}}}^{(1-\e)\g n}\label{1}\\
&=\exp\set{n\brac{\ln d+(1-\e)\g\brac{\ln\bfrac{1}{1-\e}+\ln\bfrac{\a}{16}+(1-1/\a)\ln d-2\ln\ln d-\frac{\a d^{1-1/\a}}{33\ln d}}}}\nonumber\\
&=o(1),\nonumber
\end{align}
for large $d$ and small enough $\e$.
\proofend

Let $u_0$ to be the last time for which A colors a vertex with at least $\b/2$ available colors, i.e., 
$$u_0= \min\set{u:\;a(v,\cC_u)\geq 3\b/4, \mbox{ for all }v \not\in \bigcup \cC_u},$$
where $\cC_u$ denotes the collection of color classes when \(u\) vertices remain uncolored.

If $u_0$ does not exist then A will win.

It follows from Lemma \ref{lem5} that w.h.p.\ $u_0\leq 2\g n$ and that at time $u_0$, every vertex still has at least $\b/2$ available colors. Indeed, consider the final coloring $\cC^*$ in the game that would be achieved if A follows her current strategy, even if she has to improperly color an edge. Let $U=\set{v\notin \cC_{u_0}:a(v,\cC^*)< \b/2}$. Now we can assume that $|U|\leq \g n$. Because the number of colors available to a vertex decreases as vertices get colored, from $u_0$ onwards, every vertex colored by $A$ is in $U$. Therefore $u_0\leq 2\g n$.

Now let $u_1$ be the first time that there are at most $2\g n$ uncolored vertices and $a(v,\cC_u)\geq \b/2, \mbox{ for all }v \not\in \bigcup \cC_u$. By the above, w.h.p.\ $u_1 \leq
u_0$, so in particular w.h.p.\ $u_1$ exists. A can determine $u_1$ but not $u_0$, as $u_0$ depends on the future.

A will follow a more sophisticated strategy from $u_1$ onwards. We will show next that we can find a sequence $U=U_0\supseteq U_1\supseteq \cdots \supseteq U_\ell$ with the following properties: The edges of $U_i:(U_{i-1}\setminus U_i)$ between $U_i$ and $U_{i-1}\setminus U_i$ will be divided into two classes, {\em heavy} and {\em light}. Vertex $w$ is a heavy (resp. light) neighbor of vertex $v$ if
the edge $(v,w)$ is heavy (resp. light).
\begin{enumerate}[{\bf (P1)}]
\item Each vertex of $U_i\setminus U_{i+1}$ has at most one light neighbor in $U_{i+1}$, for $0\leq i<\ell$.
\item All $U_i:(U_{i-1}\setminus U_i)$ edges are light for $i\geq 2$.
\item Each vertex of $U_1$ either has degree at most $\b/3$ in $U_0$ or it has at most $\b/20$ heavy neighbors in $U_0\setminus U_1$.
\item $d_{U_i}(v)\leq \b/3$ for $v\in U_i\setminus U_{i+1}$.
\item $U_\ell$ contains at most one cycle.
\end{enumerate}
From this, we can deduce that the edges of $U_0$ can be divided up into the heavy edges $E_H$, light edges $E_L$, the edges inside $U_\ell$ and the rest of the edges. Assume first that $U_\ell$ does not contain a cycle. $F=(U,E_L)$ is a forest and the strategy in \cite{R1} can be applied. When attempting to color a vertex $v$ of $F$, there are never more than three $F$-neighbors of $v$ that have been colored. Since there are at most $\b/3+\b/20$ non-$F$ neighbors, A will succeed since she has an initial list of size $\b/2$.

If $U_\ell$ contains a cycle $C$ then A can begin by coloring a vertex of $C$. This puts A one move behind in the tree coloring strategy, in which case we can bound the number of $F$-neighbors by four.

It only remains to prove that the construction P1--P5 exists w.h.p. Remember that $d$ is sufficiently large here.

We can assume without loss of generality that $|U_0|=2\g n$. This will not decrease the sizes of
the sets $a(v,U_0)$.
\subsubsection{The verification of P1--P4: Constructing $U_1$}
Applying Lemma \ref{l2} with
$$\s=2\g\text{ and }\th=d^{1/\a}\ln^3d\text{ and }\D=2\th+\b/4<\b/3\text{ and }\t=\th/\b$$
we see that w.h.p.
$$U_{1,a}'=\set{v\in U_0:d_{U_0}(v)\geq 2\th+\b/4}\text{ satisfies }
|U_{1,a}'|\leq 2\t\g n=\frac{64d^{3/\a}\ln^6d}{\a^2d^2}n.$$
We then let $U_{1,a}\supseteq U_{1,a}'$ be the subset of $U_0$ consisting of the vertices with the $2\t\g n$
largest
values of $d_{U_0}$.

We then construct
$U_{1,b}\supseteq U_{1,a}$ by repeatedly adding vertices $x_1,x_2,\ldots,x_r$ of $U\setminus U_{1,a}$ such that $x_j$
is the lowest numbered vertex not in $X_j=U_{1,a}\cup \set{x_1,x_2,\ldots,x_{j-1}}$  having at least three neighbors in
$X_j$. This ends
with $r\leq 5|U_{1,a}|$ in order that we do not violate the conclusion of Lemma \ref{l1} with
$$\s=12\t\g=\frac{192d^{3/\a}\ln^6d}{\a^2d^2}\text{ and }\th=5/2$$
which is applicable since
$$\bfrac{384ed^{3/\a}\ln^6d}{5\a^2 d}^{5/2}<\frac{192d^{3/\a}\ln^6d}{2e\a^2d^2}.$$
It follows that
\beq{U1b}
|U_{1,b}|\leq 12\t\g n.
\eeq
Next let $A$ be the set of vertices in $U\setminus U_{1,b}$ that have two neighbors in $U_{1,b}$
and let $B$ be the set of vertices in $U_{1,b}$ that have more than $\b/20$ neighbors in
$A$. We argue that w.h.p. 
\beq{sizeB}
|B|\leq \frac{1200d^{3/\a}\t\g n}{\b}=\frac{48000\ln^4d}{\a^3d^{3-6/\a}}n.
\eeq
We first consider the size of $A$. We first prove that w.h.p. 
\beq{sizeA}
|A|\leq 12d^{3/\a}\t\g n.
\eeq
For a set $S$, let $D_2(S)$ denote the set of vertices not in $S$ that have at least two neighbors in $S$.
\begin{lemma}\label{D2S}
W.h.p. $|D_2(S)|\leq d^{3/\a}|S|$ for all $|S|\leq 12\t\g n$.
\end{lemma}
\proofstart
For easy reference we note that $\t\g=\frac{40\ln^3d}{\a^2d^{2-2/\a}}$. For $|S|\leq 12\t\g n$ and $K=d^{3/\a}$, we have
\begin{align*}
\Pr(\exists |S|\leq 12\t\g:\;|D_2(S)\geq K|S|)&\leq \sum_{s=2}^{12\t\g n}\binom{n}{s}\binom{n}{K s}\brac{\binom{s}{2}\frac{d^2}{n^2}}^{Ks}\\
&\leq \sum_{s=2}^{12\t\g n}\bfrac{ne}{s}^s\bfrac{ne}{Ks}^{Ks}\bfrac{s^2d^2}{2n^2}^{Ks}\\
&=\sum_{s=2}^{12\t\g n}\brac{\bfrac{sed^2}{2Kn}^{K-1}\cdot \frac{e^{2}d^{2}}{2K}}^s\\
&=o(1).
\end{align*}
\proofend

Equation \eqref{sizeA} follows immediately from \eqref{U1b} and Lemma \ref{D2S}. To bound the size of $B$ we prove the following lemma.
\begin{lemma}\label{lemB}
W.h.p. there do not exist disjoint sets $S,T$ such that $|T|\leq t_0=12d^{3/\a}\t\g n$ and $|S|\geq 100|T|/\b$ such that each $v\in S$ has at least $\b/20$ neighbors in $T$.
\end{lemma}
\proofstart
We have
\begin{align*}
\Pr(\exists S,T\text{ denying lemma})&\leq \sum_{t=\b/20}^{t_0}\binom{n}{t}\binom{n}{100t/\b}\brac{\binom{t}{\b/20} \bfrac{d}{n}^{\b/20}}^{100t/\b}\\
&\leq \sum_{t=\b/20}^{t_0}\bfrac{ne}{t}^t\bfrac{ne\b}{100t}^{100t/\b}\bfrac{20etd}{\b n}^{5t}\\
&=\sum_{t=\b/20}^{t_0}\brac{\bfrac{t}{n}^{4-100/\b}\frac{e^{100/\b+6}\b^{100/\b-5}d^{5}}{100^{100/\b}} }^t\\
&=o(1).
\end{align*}
\proofend

Equation \eqref{sizeB} follows immediately from \eqref{sizeA} and Lemma \ref{lemB}.

Now let $A_1$ be the set of vertices in $A$ that have two neighbors of $B$. It follows from Lemma \ref{D2S} that w.h.p.
$$|A_1|\leq d^{3/\a}|B|\leq \frac{48000\ln^4d}{\a^3d^{3-9/\a}}n.$$
Next let $B_1$ be the set of vertices in $B$ that have at least $\b/20$ neighbors of $A_1$. It follows from Lemma \ref{lemB} that w.h.p.
$$|B_1|\leq \frac{100|A_1|}{\b}\leq \frac{4800000\ln^5d}{\a^4d^{4-10/\a}}n.$$
\begin{lemma}\label{A2}
W.h.p. if $s_0=e^{-d}n\leq |S|\leq s_1=\frac{4800000\ln^5d}{\a^4d^{4-10/\a}}n$ then $|N(S)|\leq d^{1+1/\a}|S|$.
\end{lemma}
\proofstart
\begin{align*}
\Pr(\exists S:\text{ denying lemma})&\leq \sum_{s=s_0}^{s_1}\binom{n}{s}\binom{n}{d^{1+1/\a}s}\bfrac{sd}{n}^{d^{1+1/\a}s}\\
&=\sum_{s=s_0}^{s_1}\brac{\frac{ne}{s}\bfrac{e}{d^{1/\a}}^{d^{1+1/\a}}}^s\\
&=o(1).
\end{align*}
\proofend

Now let $A_2$ be the set of vertices in $A_1$ that have a neighbor in $B_1$. It follows from Lemma \ref{A2} that w.h.p. 
$$|A_2|\leq \frac{4800000\ln^5d}{\a^4d^{3-11/\a}}n.$$
(Note that if $|S|<e^{-d}n$ then Lemma \ref{A2} implies that $|N(S)|\leq |S|+d^{1+1/\a}e^{-d}n$.)

Next let $U_{1,c}=U_{1,b}$ and $Y_0=A_2\cup B_1$.  We now construct $U_1\supseteq U_{1,c}$ by repeatedly adding vertices $y_1,y_2,\ldots,y_s$ of $U\setminus U_{1,c}$ such that $y_j$ is the lowest numbered vertex not in $Y_j=Y_0\cup \set{y_1,y_2,\ldots,y_{j-1}}$ that has at least two neighbors in $Y_{j}$. Taking $\th=3/2$ and using Lemma \ref{l1}, this ends with $s\leq 3|B|$ by the same argument used to show $s\leq 5|U_{1,a}|$ above. Note that
$$|U_1|\leq \g_1n=13\t\g n=\frac{208\ln^6d}{\a^2d^{2-3/\a}}.$$
We let $W=U_0\setminus U_1$ and partition the edges $W:U_1$ into light and heavy edges. 
\begin{enumerate}[{\bf (A)}]
\item $W:(Y_s\setminus U_{1,b})$: These edges will be heavy.
\item $(W\setminus (A\cup Y_s)):U_{1,b}$. These edges will be light. Note that each $v\in W\setminus (A\cup Y_s)$ has at most one neighbor in $U_{1,b}$.
\item $(A\setminus (A_1\cup Y_s)):(U_{1,b}\setminus Y_s)$. If $v\in A\setminus (A_1\cup Y_s)$ then $v$ has at most two neighbors in $U_{1,b}$. At most one of these can be in $B$ and we make the corresponding edge light. If $v$ has a neighbor in $U_{1,b}\setminus B$ then we make the corresponding edge heavy.
\item $(A_1\setminus Y_s):(U_{1,b}\setminus Y_s)$. If $v\in A_1\setminus Y_s$ then $v$ has at most two neighbors in $B$. None of these can be in $B_1$ and we make the corresponding edges heavy.
\end{enumerate}
We now have to check that {\bf P1--P4} hold. 

First consider the light edges. There is at most one for each $v\in W$ and so {\bf P1} holds.

Now consider the heavy edges. Vertices in $B$ can only have light neighbors in $W\setminus A$ and vertices in $B\setminus B_1$ have at most $\b/20$ heavy neighbors in $A$. Vertices in $U_{1,b}\setminus B$ can have at most $\b/20$ heavy neighbors in $W$. Vertices in $U_1\setminus U_{1,b}$ have (heavy) degree at most $\b/3$ in $U_0$. This verifies {\bf P3} and {\bf P4} holds by the definition of $U_{1,a}$.
\subsubsection{The verification of P1--P4: Constructing $U_2$}
Applying Lemma \ref{l2} again, with
$$\s=\g_1\text{ and }\th=3\text{ and }\D=2\th+\b/3\text{ and }\t=12/\b$$
we see that w.h.p.
\beq{eq0}
U_2'=\set{v\in U_1:d_{U_1}(v)\geq 6+\b/3}\text{ satsifies }|U_2'|\leq \g_2'=\frac{12\g_1}{\b}
\leq \frac{2500\ln^7d}{\a^3d^{3-4/\a}}.
\eeq
We then construct
$U_2\supseteq U_2'$ by repeatedly adding vertices $x_1,x_2,\ldots,x_r$ of $U_1\setminus U_2'$ such that $x_i$
has at least two neighbors in
$U_2'\cup \set{x_1,x_2,\ldots,x_{i-1}}$. This ends
with $r\leq 7|U_2'|$ in order that we do not violate the conclusion of Lemma \ref{l1} with
\beq{eq1}
\s=8\g_2'\leq\frac{20000\ln^7d}{\a^2 d^{3-4/\a}}\text{ and }\th=\frac{15}{8}
\eeq
which is applicable since
$$\bfrac{20000\cdot 4\cdot e\ln^4d}{15\a^3 d^{2-4/\a}}^{15/8}<\frac{20000\ln^7d}{2e\a^3 d^{3-4/\a}}.$$
This verifies P1--P4 with $i=1$.
\subsubsection{The verification of P1--P5: Constructing $U_i,\,i\geq 3$}
We now repeat the argument to create the sequence $U_0\supseteq U_1\supseteq \cdots \supseteq U_\ell$.
The value of $\th$ has decreased to 15/8 (see \eqref{eq1}) and
 $|U_i|\leq (12\g/\b)|U_{i-1}|$, as in \eqref{eq0}. We choose $\ell$ so that $|U_\ell|\leq \ln n$.
We can easily prove that w.h.p.\ $S$ contains at most $|S|$ edges whenever $|S|\leq \ln n$, implying P5.

This completes the proof of Part (b) of Theorem \ref{th2}.

\section{Theorem \ref{th3}: $G_{n,d}$} \label{Gnd}
We will not change A or B's strategies. We will simply transfer the
relevant structural results from $G_{n,d/n}$ to $G_{n,d}$. Some of
the unimportant constants will change, but this will not change the
verification of the success of the various strategies. We will first
do this using Theorem \ref{th2} under the assumption that $d\leq
n^{1/4}$. For larger $d$ we will use Theorem \ref{th1} and the
``sandwiching theorem'' of Kim and Vu \cite{KV}. This latter
analysis is given in Section \ref{usingKV}.
\subsection{$d_0\leq d\leq n^{1/4}$}\label{Gnd1}
Here we assume that $d_0$ is a sufficiently large constant.
We begin with the configuration model of Bollob\'as \cite{Bollcon}.
We have a set $W$ of {\em points} and this is partitioned into sets $W_1,W_2,\ldots,W_n$ of size $d$.
We define $\f:W\to[n]$ by $\f(x)=j$ for all $x\in W_j$.
We associate each {\em pairing} or {\em configuration} $F$ of $W$ into $|W|/2$ pairs
to a multigraph $G_F$ on the vertex set $[n]$. A pair $\set{x,y}\in F$ becomes an edge
$(\f(x),\f(y))$ of $G_F$.
Now there are $\frac{(dn)!}{(dn/2)!2^{dn/2}}$ pairings and
each simple $d$-regular graph (without loops or multiple edges) arises $(d!)^n$ times as $G_F$.
So for any pair of $d$-regular graphs $G_1,G_2$ we have
\beq{simple}
\Pr(G_F=G_1\mid G_F\text{ is simple})=\Pr(G_F=G_2\mid G_F\text{ is simple}).
\eeq
In order to use this, we need a bound on the probability that $G_F$ is simple.
\beq{simple0}
\Pr(G_F\text{ is simple})\geq e^{-2d^2}.
\eeq
This is the content of Lemma 2 of \cite{CFR}.

It follows from \eqref{simple} and \eqref{simple0} that for any graph property $\cA$:
\beq{cA}
\e^{2d^2}\Pr(G_F\in \cA)=o(1)\text{ implies }\Pr(G_{n,d}\in\cA)=o(1).
\eeq

We can use the above to estimate $\r=\Pr(G_{n,d/n})$ is $d$ regular. We write this as
$$\r=\Pr(G=G_{n,d/n}\text{ is $d$ regular}\mid\ \  |E(G)|=dn/2)\Pr(|E(G)|=m=dn/2).$$
It is easy to show, using Stirling's approximation,
that
$$\Pr(|E(G)|=m)=\Omega(m^{-1/2})$$
and so we concentrate on the other factor.

Let $N=\binom{n}{2}$. There are $\binom{N}{m}\leq \bfrac{Ne}{m}^m,$
graphs with vertex set $[n]$ and $m$ edges of which

$$\Omega\brac{\frac{e^{-2d^2}(dn)!}{(dn/2)!2^{dn/2}(d!)^n}}\text{ are $d$-regular.}$$

So, since $d=o(n)$,
\begin{multline}\label{eq3}
\r=\Omega\brac{\frac{e^{-2d^2}}{(dn)^{1/2}}\cdot \bfrac{dn}{e}^{dn/2}\cdot \frac{1}{(d!)^n}\cdot
\bfrac{d}{e(n-1)}^{dn/2}}=\\
\Omega\bfrac{d^{dn}}{(dn)^{1/2}e^{dn+2d^2}(d!)^n}=\Omega\brac{\bfrac{1}{10d}^{n/2}}.
\end{multline}
We need another crude estimate. We prove
a small modification of Lemma 1 from \cite{CFR}.
\begin{lemma}\label{eq6}
Given $\{a_i,b_i\},\,i=1,2,\ldots k\leq n/8d$ then
$$\Pr((a_i,b_i)\in E(G_{n,d}),\,1\leq i\leq k)\leq \bfrac{20d}{n}^k.$$
\end{lemma}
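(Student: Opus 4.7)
The plan is to work inside the configuration model of Bollob\'as introduced above, obtain a bound on the probability that each forced pair is realized in the random pairing $G_F$, and then transfer the estimate to $G_{n,d}$ using the conditioning identity behind \eqref{simple}--\eqref{simple0}.

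First I would bound $\Pr\bigl(G_F\supseteq\{(a_i,b_i)\}_{i=1}^k\bigr)$ by revealing the forced pairs sequentially. When the $i$-th pair is exposed, the conditional probability that some point in $W_{a_i}$ is paired with some point in $W_{b_i}$ is at most $d^2/(dn-2(i-1)-1)$: we may choose any of the $d$ points in $W_{a_i}$ and ask for it to land on one of the $d$ specific targets among the $dn-2(i-1)-1$ unmatched points. Under the hypothesis $k\leq n/(8d)$, the denominator is at least $dn/2$ throughout, so each factor is at most $2d/n$, and the chain rule gives
\[
\Pr\bigl(G_F\supseteq\{(a_i,b_i)\}_{i=1}^k\bigr)\leq \bfrac{2d}{n}^k.
\]

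Next I would transfer this to $G_{n,d}$. Using $\Pr(G_{n,d}\in\cA)=\Pr(G_F\in\cA\mid G_F\text{ is simple})$ together with \eqref{simple0} one obtains
\[
\Pr\bigl(G_{n,d}\supseteq\{(a_i,b_i)\}_{i=1}^k\bigr)\leq \frac{\Pr(G_F\supseteq\{(a_i,b_i)\})}{\Pr(G_F\text{ is simple})}\leq \bfrac{2d}{n}^k e^{2d^2}.
\]
The main obstacle is absorbing the $e^{2d^2}$ factor into the factor-of-ten slack per edge that separates $(2d/n)^k$ from $(20d/n)^k$. When $k=\Omega(d^2)$ this is immediate because $10^k$ dominates $e^{2d^2}$. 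For smaller $k$ I would sharpen the transfer by noting that conditioning on the $k$ forced pairs leaves a uniformly random pairing of the remaining $dn-2k$ points whose degree sequence (with $d$ replaced by $d-1$ at the at most $2k\leq n/(4d)$ affected vertices) is essentially unchanged; a quick inspection of the proof of \eqref{simple0} shows that this reduced pairing is simple with probability within an absolute constant factor of $\Pr(G_F\text{ is simple})$, cutting the transfer loss down to $O(1)$. In either case the slack comfortably absorbs the loss, giving the stated bound.
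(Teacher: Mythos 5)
Your route through the configuration model is genuinely different from the paper's: they work directly with the set of $d$-regular graphs and use a switching argument (deleting $\{a_t,b_t\},\{x_1,y_1\},\{x_2,y_2\}$ and inserting $\{a_t,x_1\},\{y_1,y_2\},\{b_t,x_2\}$), counting forward and backward switchings to bound the ratio $|\Omega_t|/|\Omega_{t-1}|\leq 20d/n$, exactly following Lemma~1 of [CFR]. Your first step in the configuration model is fine: sequential exposure of the $k$ forced pairs, with at most $d^2$ candidate point-pairs per edge and at least $dn-2k-1\geq dn/2$ unmatched points, does give $\Pr(G_F\supseteq\{(a_i,b_i)\})\leq(2d/n)^k$.

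The gap is in the transfer. You correctly observe that dividing by $\Pr(G_F\text{ simple})\geq e^{-2d^2}$ only wins when $k=\Omega(d^2)$, and the lemma is needed in this paper for $k$ as small as $2\theta$ with $\theta$ a bounded constant (see the application in Lemma~\ref{l1a}), so the crude transfer genuinely fails. Your proposed repair --- that $\Pr(G_F\text{ simple}\mid G_F\supseteq S)$ is within an absolute constant factor of $\Pr(G_F\text{ simple})$ --- is the right quantity to control, but it is asserted rather than proved. Establishing it is not a ``quick inspection'': conditioning on $G_F\supseteq S$ changes the degree sequence at up to $2k$ vertices, permits additional pairs between $W_{a_i}$ and $W_{b_i}$ (which would then be ruled out by simplicity, cutting the numerator), and the bound $e^{-2d^2}$ in \eqref{simple0} is only a one-sided estimate, so bounding the \emph{ratio} requires both an upper bound on the conditional simplicity probability and a matching lower bound on the unconditional one. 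Making this precise essentially amounts to redoing a switching or Poisson-approximation argument, which is exactly the content of the paper's proof. As written, your proposal has a genuine gap at this step; the switching argument the paper uses is the cleaner way to close it.
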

\proofstart
Let $\cG_d$ denote the set of $d$-regular graphs with vertex set $[n]$.
For $0\leq t\leq k$ we let
$$\Om_t=\{G\in \cG_d:\;\{a_i,b_i\}\in E(G),1\leq i\leq t\mbox{ and }\{a_i,b_i\}\notin E(G),t+1\leq i\leq k\}.$$
We consider the set $X$ of pairs $(G_1,G_2)\in \Om_t\times \Om_{t-1}$ such
that $G_2$ is obtained from $G_1$ by deleting disjoint edges
$\{a_t,b_t\},\{x_1,y_1\},\{x_2,y_2\}$ and replacing them by
$\{a_t,x_1\},\{y_1,y_2\}$, $\{b_t,x_2\}$. Given $G_1$, we can choose
$\{x_1,y_1\},\{x_2,y_2\}$ to be any ordered pair of disjoint edges
which are not incident with $\{a_1,b_1\},\ldots,\{a_k,b_k\}$ or their neighbours and such that
$\{y_1,y_2\}$ is not an edge of $G_1$.
Thus each $G_1\in \Om_1$ is in at least
$(D-(2kd^2+1))(D-(2kd^2+2))$ pairs, where $D=dn/2$. Each $G_2\in
\Om_{t-1}$ is in at most $2Dd^2$ pairs. The factor of 2 arises
because a suitable edge $\{y_1,y_2\}$ of $G_2$
has an orientation relative to the switching back to $G_1$.
It follows that
$$\frac{|\Om_t|}{|\Om_{t-1}|}\leq
\frac{2Dd^2}{(D-(2kd^2+1))(D-(2kd^2+2d+2))}\leq \frac{20d}{n}.$$
It follows that
$$\frac{|\Om_k|}{|\Om_0|+\cdots+|\Om_k|}\leq \bfrac{20d}{n}^k$$
and this implies the lemma.
\proofend
\subsubsection{The lower bound}\label{Gnd2}
Using \eqref{simple0} we can replace \eqref{f1a} by
$$e^{2d^2}\exp\set{(\b+\g+\b^2-(2\b+\g)^2/2+o_d(1)))d^{-1}n\ln^2d}=o(1)$$
for $d\leq n^{1/4}$. After this, we can argue as in the case $G_{n,p}$.
\subsubsection{The upper bound}\label{Gnd3}
We first need to prove the equivalent of Lemmas \ref{l1} and \ref{l2}.
\begin{lemma}\label{l1a}
If $1<\th\leq d^{1/6}\ln^3d$ and
\beq{eq5a}
\bfrac{10\s ed}{\th}^\th\leq \frac{\s}{2e}
\eeq
then w.h.p.\
there does not exist $S\subseteq [n],|S|\leq \s n$ such that $e(S)\geq \th |S|$.
\end{lemma}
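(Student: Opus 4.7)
My plan is to mirror the proof of Lemma \ref{l1} almost line by line, with Lemma \ref{eq6} replacing the edge-independence used for $G_{n,p}$. First I would take a union bound over pairs $(S,E^*)$ where $S\subseteq[n]$ has size $s\in[2\th,\s n]$ and $E^*$ is a set of $\th s$ prescribed pairs inside $S$. Applying Lemma \ref{eq6} with $k=\th s$ bounds the probability that every pair in $E^*$ is an edge of $G_{n,d}$ by $(20d/n)^{\th s}$, giving
\[
\Pr(\exists S:\,|S|\leq\s n,\,e(S)\geq\th|S|)\leq\sum_{s=2\th}^{\s n}\binom{n}{s}\binom{\binom{s}{2}}{\th s}\bfrac{20d}{n}^{\th s}.
\]
Carrying out the same manipulation as in \eqref{3}--\eqref{2} reduces the right-hand side to $\sum_{s=2\th}^{\s n}\brac{e(s/n)^{\th-1}(10ed/\th)^\th}^s$. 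The factor $10$ in hypothesis \eqref{eq5a} (in place of the $1/2$ in \eqref{eq5}) is exactly what is needed to absorb the extra factor $20$ coming from Lemma \ref{eq6}.

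Using \eqref{eq5a} together with $s/n\leq\s$, each summand is bounded by $(1/2)^s$. A slightly sharper analysis, maximizing $s\mapsto s\log g(s)$ for $g(s)=e(s/n)^{\th-1}(10ed/\th)^\th$ over $[2\th,\s n]$, shows that the optimum is at $s^*=n(\th/(10e^2 d))^{\th/(\th-1)}=\Theta(n)$ with value $s^*(1-\th)=-\Theta(n)$; hence each summand is at most $e^{-\Omega(n)}$ and the total sum is $o(1)$, exactly as in Lemma \ref{l1}.

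The main obstacle is that Lemma \ref{eq6} requires $k=\th s\leq n/(8d)$. For the largest $s=\s n$ this amounts to $\s\leq 1/(8\th d)$, and a direct calculation shows that the upper bound on $\s$ forced by \eqref{eq5a} is of order $(\th/(10ed))^{\th/(\th-1)}$, which lies below $1/(8\th d)$ only when $\th$ is not too much larger than $1$. When $\th$ approaches the upper bound $d^{1/6}\ln^3 d$, I would fall back on the configuration model via \eqref{cA}: running the same union-bound calculation in $G_F$ and using the elementary estimate $\Pr(k\text{ disjoint pairs}\subseteq F)\leq\prod_{i=1}^k(dn-2i+1)^{-1}\leq (2d/n)^k$, valid for all $k\leq dn/4$ (which comfortably accommodates every $s\leq\s n$), one obtains the same form of bound $e^{-\Omega(n)}$ for the bad event in $G_F$. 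Since $d\leq n^{1/4}$ throughout Section \ref{Gnd1}, the correction factor $e^{2d^2}\leq e^{2\sqrt n}$ from \eqref{cA} is easily absorbed when passing back to $G_{n,d}$, completing the argument.
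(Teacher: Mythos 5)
Your high-level plan matches the paper's proof: run the union bound, apply Lemma~\ref{eq6} where its hypothesis $k=\th s\leq n/(8d)$ holds, and fall back to the configuration model (with the $e^{2d^2}$ inflation) for the larger $s$. The paper implements the same split with the cleaner cutoff $s<3d^2$ versus $s\geq 3d^2$ (which always lies inside the Lemma~\ref{eq6} regime because $\th\leq d^{1/6}\ln^3 d$ and $d\leq n^{1/4}$ force $3\th d^2\leq n/8d$), and additionally disposes of small $d$ (say $d\leq\ln^{1/3}n$) by pure inflation. So the route is the same; the difference is only where the cut is made.

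However, your ``slightly sharper analysis'' contains a real error. The function $h(s)=s\log g(s)$ with $g(s)=e(s/n)^{\th-1}(10ed/\th)^\th$ satisfies $h''(s)=(\th-1)/s>0$, so it is \emph{convex}: the stationary point $s^*$ you compute is the \emph{minimum} of $h$, not the maximum, and the maximum over $[2\th,\s n]$ sits at an endpoint. (Also, $s^*=n(\th/(10e^2d))^{\th/(\th-1)}$ is not $\Theta(n)$ uniformly; the exponent $\th/(\th-1)$ blows up as $\th\downarrow 1$.) Consequently the claim ``each summand is at most $e^{-\Omega(n)}$'' is false: at $s=2\th$ the summand is only polynomially small, roughly $O\bigl(n^{-2\th(\th-1)}\bigr)$ — exactly as in Lemma~\ref{l1}, whose sum is $O(d^\th/n^{\th-1})$ and not exponentially small. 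This matters for your argument, because polynomially small terms would \emph{not} survive a blanket $e^{2d^2}$ inflation when $d$ can be as large as $n^{1/4}$. The argument is rescued only because those small-$s$ terms are precisely the ones you bound via Lemma~\ref{eq6} directly in $G_{n,d}$ (no $e^{2d^2}$ penalty), while the large-$s$ terms that go through the configuration model with the $e^{2d^2}$ penalty are genuinely $2^{-s}\leq e^{-\Omega(n/(d\th))}$ and absorb it. Your write-up should make this asymmetry explicit rather than asserting a uniform exponential bound that does not hold.
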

\proofstart
$$\Pr(\exists S: |S|\leq \s n\text{ and }e(S)\geq \th |S|)\leq \sum_{s=2\th}^{\s n}\binom{n}{s}
\binom{\binom{s}{2}}{\th s}\p_s$$
where
$$\p_s=\max_{\substack{X\subseteq \binom{[s]}{2}\\|X|=\th s}}\Pr(E(G_{n,d})\supseteq X).$$
It follows from \eqref{simple0} that $\p_s\leq e^{2d^2}\bfrac{d}{n}^{\th s}$. If $d$ is small, say
$d\leq \ln^{1/3}n$ then we can see from the proof of Lemma \ref{l1} that
$$\Pr(\exists S: |S|\leq \s n\text{ and }e(S)\geq \th |S|)\leq O\brac{e^{2\ln^{2/3}n}\cdot\frac{d^\th}{n^{\th-1}}}=o(1).$$
We can therefore assume that $d\geq \ln^{1/3}n$ and then
\begin{align*}
\sum_{s=3d^2}^{\s n}\binom{n}{s}
\binom{\binom{s}{2}}{\th s}\p_s&\leq e^{2d^2}\sum_{s=3d^2}^{\s n}\binom{n}{s}
\binom{\binom{s}{2}}{\th s}\bfrac{d}{n}^{\th s}\\
&\leq e^{2d^2}\sum_{s=3d^2}^{\s n}\brac{e\bfrac{s}{n}^{\th-1}\bfrac{ed}{2\th}^{\th}}^s\\
&\leq e^{2d^2}\sum_{s=3d^2}^{\s n}2^{-s}\\
&=o(1).
\end{align*}
When $s\leq 3d^2$ we use Lemma \ref{eq6}. For this we will need to have $\th s\leq 3\th d^2\leq n/8d$.
The maximum value of $\th$ is $d^{1/6}\ln^3d$ and so the lemma can indeed be applied for $d\leq n^{1/4}$.
Assuming this, we have
\begin{align*}
\sum_{2\th}^{3d^2}\binom{n}{s}
\binom{\binom{s}{2}}{\th s}\p_s&\leq \sum_{2\th}^{3d^2}\binom{n}{s}
\binom{\binom{s}{2}}{\th s}\bfrac{20d}{n}^{\th s}\\
&\leq\sum_{2\th}^{3d^2}\brac{e\bfrac{s}{n}^{\th-1}\bfrac{20ed}{2\th}^{\th}}^s\\
&=o(1).
\end{align*}
\proofend
\begin{lemma}\label{l2a}
Let $\s,\th$ be as in Lemma \ref{l1a}. If
$$\bfrac{10\s ed}{(\D-2\th)\t}^{(\D-2\th)\t}\leq\frac{\s}{4e}$$
then w.h.p.\ there do not exist $S\supseteq T$ such that
$|S|\leq \s n,|T|\geq \t s$ and $d_S(v)\geq \D$ for $v\in T$.
\end{lemma}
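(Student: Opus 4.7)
The plan is to mirror the proof of Lemma~\ref{l2} for $G_{n,p}$, substituting the $G_{n,d}$ edge-probability estimates in place of $(d/n)^k$, exactly in the manner by which Lemma~\ref{l1a} was derived from Lemma~\ref{l1}. Specifically, I would first condition on the (w.h.p.) conclusion of Lemma~\ref{l1a}, namely that every $S\subseteq[n]$ with $|S|\leq \sigma n$ satisfies $e(S)<\theta|S|$. If such an $S\supseteq T$ satisfies the hypotheses of Lemma~\ref{l2a}, then
\[
e(T:S\setminus T) = \sum_{v\in T}d_S(v) - 2e_S(T) \geq \Delta|T| - 2\theta|S| \geq (\Delta-2\theta)\tau s,
\]
using $\tau\leq 1$. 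So it suffices to show that w.h.p.\ no such pair $(S,T)$ exists with $e(T:S\setminus T)\geq (\Delta-2\theta)\tau s$.

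Next I would apply the union bound. Writing $k=(\Delta-2\theta)\tau s$ and letting $p_k=\max_{|X|=k}\Pr(X\subseteq E(G_{n,d}))$, the probability of a bad pair is at most
\[
\sum_{s\leq \sigma n}\sum_{t=\tau s}^{s}\binom{n}{s}\binom{s}{t}\binom{t(s-t)}{k}p_k,
\]
and I would split the outer sum into two regimes mimicking the proof of Lemma~\ref{l1a}. For small $s$ (say $s\leq 3d^2$, where $k\leq n/(8d)$ under the standing restriction $d\leq n^{1/4}$ and the parameter ranges in which Lemma~\ref{l2a} will be invoked), I would use Lemma~\ref{eq6} to get $p_k\leq (20d/n)^k$; substituting this and simplifying exactly as in the chain \eqref{5}--\eqref{4} of the proof of Lemma~\ref{l2} yields a summand of the form $\bigl(e(s/n)^{(\Delta-2\theta)\tau-1}\cdot(20ed/((\Delta-2\theta)\tau))^{(\Delta-2\theta)\tau}\bigr)^s$. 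The hypothesis of Lemma~\ref{l2a} is precisely tuned so that the bracketed quantity is at most a constant less than $1$, delivering $o(1)$. For larger $s$ (i.e.\ $s\geq 3d^2$), I would use the cruder bound $p_k\leq e^{2d^2}(d/n)^k$ coming from \eqref{simple0}, the $e^{2d^2}$ prefactor being absorbed by the exponential decay $2^{-s}$ in $s\geq 3d^2$, just as in the corresponding regime of the proof of Lemma~\ref{l1a}.

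The main obstacle is verifying the side condition $k=(\Delta-2\theta)\tau s\leq n/(8d)$ in the small-$s$ regime so that Lemma~\ref{eq6} is legitimately applicable; this is the same kind of bookkeeping that appears in Lemma~\ref{l1a}, and since Lemma~\ref{l2a} will only be invoked with $\theta,\Delta$ at most polylogarithmic in $d$ together with $\sigma$ small and $d\leq n^{1/4}$, the check is routine. The factor $10$ appearing in the hypothesis of Lemma~\ref{l2a} (compared to the $1$ in Lemma~\ref{l2}) is exactly what is needed to absorb the $20d/n$ versus $d/n$ discrepancy in edge probabilities, so no essentially new estimate is required beyond what is already in Lemmas~\ref{l1a} and~\ref{l2}.
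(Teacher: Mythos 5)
Your approach mirrors the paper's proof exactly: reduce to bounding $e(T:S\setminus T)$ via Lemma~\ref{l1a}, take a union bound, and split the range of $s$ so that Lemma~\ref{eq6} handles small $s$ (with the $(20d/n)^k$ bound and the factor $10$ in the hypothesis absorbing the extra $20$) while the $e^{2d^2}$-inflated $(d/n)^k$ bound from \eqref{simple0} handles large $s$, the prefactor being killed by the geometric decay. The paper places the cutoff at $3d^2/\tau$ rather than your $3d^2$, but this is a cosmetic difference that just gives extra slack; your choice still works since $3\ln 2>2$.

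There is one small slip in your initial reduction. You write $e(T:S\setminus T)\geq\Delta|T|-2\theta|S|\geq(\Delta-2\theta)\tau s$, ``using $\tau\leq 1$,'' but with $|T|\geq\tau s$ and $|S|=s$ this chain requires $\Delta\tau s-2\theta s\geq(\Delta-2\theta)\tau s$, i.e.\ $-2\theta\geq-2\theta\tau$, i.e.\ $\tau\geq1$ --- the opposite of what holds in every application (there $\tau\ll1$). The fix is to invoke Lemma~\ref{l1a} on $T$ itself rather than on $S$: since $T\subseteq S$ also has $|T|\leq\sigma n$, you get $e(T)\leq\theta|T|$, and then $e(T:S\setminus T)=\sum_{v\in T}d_S(v)-2e(T)\geq\Delta|T|-2\theta|T|=(\Delta-2\theta)|T|\geq(\Delta-2\theta)\tau s$, which is what the paper uses. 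With that correction the argument goes through as in the paper.
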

\proofstart
We first argue that if $d\leq\ln^{1/3}n$ then we prove the lemma by just inflating the failure probability by $e^{2d^2}$
as we did for Lemma \ref{l1a}.

We therefore assume that $d\geq\ln^{1/3}n$ and write
\begin{multline*}
\Pr(\exists S\supseteq T,\,|S|\leq \s n,\,|T|\geq \t s,|e(T:S\setminus T)|\geq (\D-2\th)\t s)\\
\leq \sum_{s,t}\binom{n}{s}\binom{s}{t}\binom{t(s-t)}{(\D-2\th)\t s}\p_s
\end{multline*}
where now we have
$$\p_s=\max_{\substack{X\subseteq T\times (S\setminus T)\\|X|=(\D-2\th)\t s}}\Pr(E(G_{n,d})\supseteq X).$$
Using \eqref{simple0} we write
\begin{align*}
&\sum_{s=3d^2/\t}^{\s n}\sum_{t=\t s}^s\binom{n}{s}\binom{s}{t}\binom{t(s-t)}{(\D-2\th)\t s}\p_s\\
&\leq e^{2d^2}\sum_{s=3d^2/\t}^{\s n}\sum_{t=\t s}^s
\binom{n}{s}\binom{s}{t}\bfrac{edt}{(\D-2\th)\t n}^{(\D-2\th)\t s}\\
&\leq e^{2d^2}\sum_{s=3d^2/\t}^{\s n}\sum_{t=\t s}^s\brac{2e\bfrac{s}{n}^{(\D-2\th)\t-1}\cdot
\bfrac{ed}{(\D-2\th)\t}^{(\D-2\th)\t}}^s\\
&\leq e^{2d^2}\sum_{s=3d^2/\t}^{\s n}\sum_{t=\t s}^s 2^{-s}\\
&=o(1).
\end{align*}
When $s\leq 3d^2/\t$ use Lemma \ref{eq6}, with the same caveats on the value of $d$. So,
\begin{align*}
& \sum_{s=2\th}^{3d^2/\t}\sum_{t=\t s}^s\binom{n}{s}\binom{s}{t}\binom{t(s-t)}{(\D-2\th)\t s}\p_s\\
&\leq \sum_{s=2\th}^{3d^2/\t}\sum_{t=\t s}^s\binom{n}{s}\binom{s}{t}\binom{t(s-t)}{(\D-2\th)\t s}
\bfrac{20d}{n}^{(\D-2\th)\t s}\\
&= O\bfrac{d^{(\D-2\th)\t}}{n^{(\D-2\th)\t-1}}=o(1).
\end{align*}

\proofend

\begin{remark}\label{rem1}
We can estimate $\Pr(\exists\cC:|B(\cC)|\geq \g n)$ by multiplying \eqref{1} by $1/\r$ and
notice that it remains $o(1)$.
\end{remark}
After this, the proof will much the same as for $G_{n,p}$, but with a few constants being changed.
\subsection{$n^{1/4}\leq d\leq n^{1/3-\e}$}\label{usingKV}
Our approach in this section is to use the sandwiching technique developed by Kim and Vu in \cite{KV} to adapt the proof of
Theorem~\ref{th1}. In some sense it is pretty clear that given the results of \cite{KV}, it will be possible to translate
the results of \cite{BFS} to deal with large regular graphs. We will carry out the task, but our proof will be abbreviated
and rely on notation from the latter paper.

Without changing the strategy used in obtaining the lower bound, we show that
each intermediate result used to prove the theorem in \cite{BFS} continues to hold for random regular graphs
$G_{n,d}$ in the range where these can be approximated sufficiently well by random graphs $G_{n,d/n}$.

In order to get the required strength from the Kim-Vu coupling, however, we require $d = np = n^{\epsilon}$
for some $\epsilon \geq \e_0$, where $\e_0$ is a small absolute constant.
\subsubsection{Notation}

We use Theorem 2 in \cite{KV} to get a joint distribution on $(H_1, G, H_2)$: $G$ is $d$-regular, $H_1 \subseteq G$,
$H_1 \subseteq H_2$, and although $G \not\subseteq H_2$, this is almost true in a way we discuss further. The graphs
$H_1$ and $H_2$ are random graphs with edge probabilities $p_1$ and $p_2$, and by judicious choice of parameters
we can set $p_1 = p/(1 + \delta)$ and $p_2 = p(1 + \delta)$, where
$$p = \frac{d}{n} \mbox{ and } \delta = \Theta\left( \left(\frac{\ln n}{d}\right)^{1/3} \right).$$

Constants defined in \cite{BFS} are in terms of $p$ and we will make this relationship explicit. Of note is the constant
$$\ell_1(p) = \log_b n - \log_b \log_b np - 10 \log_b \ln n$$
where $b=b(p)=\frac{1}{1-p}$.
\subsubsection{Kim-Vu coupling}

The construction of the coupling $(H_1, G, H_2)$ in \cite{KV} yields $H_1 \subseteq G$ w.h.p.,
but not $G \subseteq H_2$. As a substitute for such a result, we prove the following lemma.

\begin{lemma}
\label{lemma:degree}
$\Delta(G \setminus H_2) = O(1)$ w.h.p.
\end{lemma}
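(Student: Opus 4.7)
The plan is to bound $\Delta(G \setminus H_2)$ by opening up the Kim--Vu coupling $(H_1, G, H_2)$, viewing $G$ as obtained from $H_2$ via a local ``repair'' procedure whose only effect is to introduce the edges of $G \setminus H_2$. The high-level picture is that this repair does only a little total work, and what little it does is spread essentially uniformly over the vertex set.

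The first step is a global bound: show that $|E(G) \setminus E(H_2)| = o(n)$ w.h.p. Intuitively, the coupling is constructed so that, conditional on $d_{H_2}(v) \geq d$ for every $v \in [n]$, the graph $G$ can be realized as a $d$-regular subgraph of $H_2$, contributing no edges to $G \setminus H_2$. Deviations occur only when $d_{H_2}(v) < d$, which has probability $\exp(-\Omega(\delta^2 d))$ by the Chernoff bound \eqref{chl} applied to $d_{H_2}(v) \sim \Bin(n-1, p_2)$ with mean $(1+\delta)d$. Since $\delta^2 d = \Theta((\ln n)^{2/3} d^{1/3})$ is a positive power of $n$ throughout the regime $n^{1/4}\leq d \leq n^{1/3-\e}$, the failure probability at a single vertex is $n^{-\omega(1)}$.

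The second and crucial step is a per-vertex tail bound. Fix $v$ and a constant $K$, and show that $\Pr(d_{G \setminus H_2}(v) \geq K) = n^{-\omega(1)}$. One does this by observing that $v$ can be incident to an edge of $G \setminus H_2$ only if either $d_{H_2}(v) < d$, or the repair procedure reroutes edges from other ``defective'' vertices through $v$. By the vertex-exchangeability of the coupling, the number of such reroutings at $v$ is dominated by a binomial with tiny mean, and a Chernoff bound gives the required decay. A union bound over $n$ vertices then yields the lemma with $K = O(1)$.

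The principal obstacle is making the ``repair procedure'' rigorous from the Kim--Vu statement: the sandwiching theorem as stated in \cite{KV} provides the joint distribution of $(H_1, G, H_2)$ but does not automatically come with an explicit vertex-by-vertex account of the mismatch. The cleanest route is to re-derive the coupling in a form that exposes this structure, e.g.\ by running the switching argument locally and tracking the number of switches each $v$ participates in. Once that bookkeeping is in place, concentration for the number of switches at $v$ is routine, and the lemma follows by a union bound.
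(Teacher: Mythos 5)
Your plan has a genuine gap that you yourself flag at the end: the per-vertex tail bound hinges on ``opening up'' the Kim--Vu coupling to expose a vertex-by-vertex repair procedure, but the sandwiching theorem as stated does not supply this, and you never carry out the re-derivation. This is not a routine bookkeeping detail --- it is the entire content of the step. The intuition in your Step 1 is also off: the coupling is \emph{not} constructed so that $\delta(H_2)\ge d$ forces $G\subseteq H_2$ (if it were, $H_2\setminus G$ would be the only discrepancy and the lemma would be trivial), so the ``deviations occur only at defective vertices'' picture does not reflect what the switching-based construction actually does.

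The paper avoids all of this with a deterministic degree identity you are missing. For any vertex $v$, $d_{G\setminus H_2}(v) = d_G(v) - d_{H_2}(v) + d_{H_2\setminus G}(v)$, hence
\[
\Delta(G\setminus H_2)\ \le\ \Delta(G)\ -\ \delta(H_2)\ +\ \Delta(H_2\setminus G).
\]
Since $\Delta(G)=d$ exactly, and your own Chernoff computation gives $\delta(H_2)\ge d$ w.h.p., the lemma reduces to bounding $\Delta(H_2\setminus G)$. The crucial fact is that Part 3 of \cite[Theorem 2]{KV} already states this bound directly: $\Delta(H_2\setminus G)\le \frac{(1+o(1))\ln n}{\ln(\delta d/\ln n)}$, which is $O(1)$ when $d=n^{\epsilon}$. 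No re-derivation of the coupling, no per-vertex repair accounting, and no global edge count are needed; the theorem hands you the one quantity the identity requires. Your Chernoff step for $\delta(H_2)\ge d$ is the correct and useful half of the argument, but without the identity and the appeal to Part 3 the proof is not closed.
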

\proofstart
We rely on the bound $\Delta(G \setminus H_2) \le \Delta(G) - \delta(H_2) + \Delta(H_2 \setminus G).$
Trivially, $\Delta(G) = d$. Part 3 of Theorem 2 in \cite{KV} states that w.h.p.
$$\Delta(H_2 \setminus G) \le \frac{(1 + o(1)) \ln n}{\ln(\delta d/\ln n)} =
\frac{(1 + o(1)) \ln n}{\frac{2}{3}\ln d - \frac{2}{3}\ln \ln n+O(1)} = \frac{3+o(1)}{2\epsilon}.$$
We prove that w.h.p.\ $\delta(H_2) \ge d$.
For any vertex $v$, $\deg_{H_2} v$ follows the binomial distribution $B(n-1,p_2)$. By the Chernoff bound,
$$\Pr[\deg_{H_2} v < d] \le \Pr\left[B(n-1, p_2) < \left(1 - \frac{\delta}{2(1 + \delta)}\right)(n-1) p_2 \right]
\le e^{-\delta^2/(10(1 + \delta))}.$$
We can simplify the exponent here to
$$-\frac{\delta^2 d}{10 (1 + \delta)} = - \frac{\Omega((\ln n)^{2/3} d^{1/3})}{1 + \delta} \le -\Omega(n^{\epsilon/3}).$$
So $\Pr[\deg_{H_2} v < d] \le O(n^{-\Omega(n^{\epsilon/3})})$ and $\Pr[\delta(H_2) < d]= o(1)$, completing the proof.
\proofend

\subsubsection{Bounds}

We first prove a few auxiliary bounds on the relationship between $p$, $p_1$, and $p_2$, as well as
other constants in terms of these probabilities.

\begin{bound}
\label{bound:frac1}
$$1\geq \frac{\ell_1(p)}{\ell_1(p_1)}\geq 1-2\delta \mbox{, and } 1\leq \frac{\ell_1(p_1)}{\ell_1(p)} \le 1 + 2\delta.$$
\end{bound}
\proofstart
We first note that if $np\to\infty$ then the derivative $(\log_{b(p)}np)'<0$ and so if we let
$x = \frac{n}{\log_{np}np\,\ln^{10}n}$ then,
\begin{align*}
\frac{\ell_1(p_1)}{\ell_1(p)} &= \frac{\log_{b(p_1)} n - \log_{b(p_1)}
\log_{b(p_1)} np_1 - 10 \log_{b(p_1)} \ln n}{\log_{b(p)} n - \log_{b(p)} \log_{b(p)} np - 10 \log_{b(p)} \ln n} \\
&\le  \frac{\log_{b(p_1)} n - \log_{b(p_1)} \log_{b(p)} np - 10 \log_{b(p_1)} \ln n}{\log_{b(p)}
n - \log_{b(p)} \log_{b(p)} np - 10 \log_{b(p)} \ln n} \\
&= \frac{\log_{b(p_1)}(x)}{\log_{b(p)}(x)} = \frac{\ln b(p)}{\ln b(p_1)} \leq \frac{p(1+p)}{p_1}\leq  1 + 2\delta.
\end{align*}
This proves the second inequality. For the first, we take the reciprocal, and note that $(1 + 2\delta)^{-1} > 1 - 2\delta.$
\proofend

\begin{bound}
\label{bound:frac2}
$$1\geq \frac{\ell_1(p_2)}{\ell_1(p)}\geq 1-2\delta \mbox{, and } 1\leq \frac{\ell_1(p)}{\ell_1(p_2)} \le 1 + 2\delta.$$
\end{bound}
\proofstart
Apply Bound~\ref{bound:frac1} with $p_2$ in place of $p$ and $p$ in place of $p_1$, since their relationships are the same.
\proofend

Note now that if $d=n^\th$ where $\th=\Theta(1)$ then
$$\frac{\log_b\log_bnp}{\log_bn}=\frac{\ln\log_bnp}{\ln n}\approx 1-\th$$
which implies that
\beq{logbnp}
\ell_1(p)=(\th+o(1))\log_bnp.
\eeq
\begin{bound}
\label{bound:exp}
$$(1 - p_1)^{\ell_1(p)} = \frac{\ell_1(p) (\ln n)^{10}}{(\th+o(1))n} \mbox{ and } (1 - p_2)^{\ell_1(p)}
= \frac{\ell_1(p) (\ln n)^{10}}{(\th+o(1))n}.$$
\end{bound}
\begin{proof}
It follows from Bound \ref{bound:frac1} that
$$(1 - p_1)^{\ell_1(p)}= (1+o(1))(1 - p)^{\ell_1(p)}=(1+o(1))\frac{(\log_bnp)( \log^{10}n)}{n}.$$
Now use \eqref{logbnp}. The proof for $p_2$ is similar.
\end{proof}

\subsubsection{Lemmas used for the lower bound in \cite{BFS}}
The strategy used in \cite{BFS} to prove the lower bound relies on probabilistic assumptions labeled there as
Lemmas 2.1 through 2.4. By assuming that those lemmas hold for random graphs (and occasionally referencing
the proofs of the original lemmas), we prove that they hold in the random regular case as well. It follows
that the lower bound of Theorem~\ref{th1} is valid in the case of $G_{n,d}$ as well, provided our assumption
that $d=n^\epsilon$ holds.

\begin{lemma}[Lemma 2.1 of \cite{BFS}]
For every $S \subseteq [n]$ with $|S| = \ell_1(p)$, w.h.p.\ $$\ell_1(p) (\ln n)^9 \le \left| \ol{N}(S) \right|
\le \ell_1(p) (\ln n)^{11}.$$
\end{lemma}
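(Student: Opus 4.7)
The plan is to push the BFS bound through the Kim--Vu sandwich of Section \ref{usingKV}. Recall $H_1 \subseteq G$ w.h.p.\ and, by Lemma \ref{lemma:degree}, $\Delta(G \setminus H_2) = O(1)$ w.h.p. From $H_1 \subseteq G$ we get $\ol{N}_G(S) \subseteq \ol{N}_{H_1}(S)$, while every vertex in $\ol{N}_{H_2}(S) \setminus \ol{N}_G(S)$ must be incident (via $G \setminus H_2$) to some vertex of $S$, so
$$|\ol{N}_{H_2}(S)| - O(\ell_1(p)) \;\leq\; |\ol{N}_G(S)| \;\leq\; |\ol{N}_{H_1}(S)|,$$
where the $O(\ell_1(p))$ error bounds $|S|\,\Delta(G\setminus H_2)$. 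Hence it suffices to show that w.h.p.\ $|\ol{N}_{H_i}(S)| = \Theta(\ell_1(p)(\ln n)^{10})$ simultaneously for $i=1,2$ and for every $S$ with $|S|=\ell_1(p)$; the $(\ln n)^{10}$ gap between the target $\Theta(\ell_1(p)(\ln n)^{10})$ and the required window $[\ell_1(p)(\ln n)^9,\ell_1(p)(\ln n)^{11}]$ will absorb both the additive $O(\ell_1(p))$ error and the $o(1)$ multiplicative slack below.

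Fix $S\subseteq[n]$ with $|S|=\ell_1(p)$ and $i\in\{1,2\}$. The events ``$v$ has no neighbor in $S$ in $H_i$,'' indexed by $v\notin S$, are mutually independent, so
$$|\ol{N}_{H_i}(S)| \sim \Bin\bigl(n-|S|,\,(1-p_i)^{|S|}\bigr),$$
and Bound \ref{bound:exp} gives $\mu_i := \E|\ol{N}_{H_i}(S)| = (\th+o(1))\,\ell_1(p)(\ln n)^{10}$. Applying the Chernoff bounds \eqref{chl} and \eqref{chu1} with deviation $\epsilon=(\ln n)^{-4}$ yields
$$\Pr\!\left[\bigl|\,|\ol{N}_{H_i}(S)|-\mu_i\bigr|\geq \epsilon\mu_i\right] \;\leq\; \exp\!\left(-\Omega\!\left((\ln n)^{2}\,\ell_1(p)\right)\right).$$
Union-bounding over the $\binom{n}{\ell_1(p)}\leq e^{\ell_1(p)\ln n}$ choices of $S$ and the two values of $i$, the total failure probability is $e^{\,\ell_1(p)\ln n-\Omega((\ln n)^{2}\ell_1(p))}=o(1)$, so w.h.p.\ $|\ol{N}_{H_i}(S)| = (1+o(1))\mu_i$ uniformly.

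Feeding this into the sandwich inequality, $|\ol{N}_G(S)| = (1+o(1))\mu_i = \Theta(\ell_1(p)(\ln n)^{10})$ w.h.p.\ for every $|S|=\ell_1(p)$, which for $n$ large is contained in $[\ell_1(p)(\ln n)^9,\ell_1(p)(\ln n)^{11}]$ as required. The only mildly delicate point is that the Kim--Vu coupling is one-sided ($H_1\subseteq G$ but $G\not\subseteq H_2$); Lemma \ref{lemma:degree} is exactly what we need to make the $G\setminus H_2$ discrepancy additively $O(\ell_1(p))$, which is a factor $(\ln n)^{10}$ below the main term and therefore harmless. Apart from this bookkeeping the argument is a standard concentration-plus-union-bound estimate on the Erdős--Rényi sandwich graphs.
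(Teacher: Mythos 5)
Your proof is correct and follows essentially the same route as the paper: use $H_1 \subseteq G$ for the upper bound, use Lemma~\ref{lemma:degree} to control $G \setminus H_2$ for the lower bound, compute the binomial mean via Bound~\ref{bound:exp}, and finish with a Chernoff bound (you are slightly more explicit than the paper about the union bound over all $S$ of size $\ell_1(p)$, which is indeed needed). One small note: you correctly wrote the lower-bound sandwich as $|\ol{N}_{H_2}(S)| - O(\ell_1(p)) \leq |\ol{N}_G(S)|$, whereas the paper's displayed line reads ``$|\ol{N}_G(S)| \le |\ol{N}_{H_2}(S)| + O(|S|)$,'' which has the direction reversed and appears to be a typo since only your form is useful for a lower bound.
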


\begin{proof}
For an $S$ as above, $\ol{N}(S) = \ol{N}_G(S) \subseteq \ol{N}_{H_1}(S)$. The distribution of $\ol{N}_{H_1}(S)$ is
binomial with
mean $n (1 - p_1)^{\ell_1(p)}$, which is at most $O(\ell_1(p) (\ln n)^{10})$ by Bound~\ref{bound:exp}.
We can use Chernoff bounds to get $|\ol{N}_{H_1}(S)| \le \ell_1(p) (\ln n)^{11}$, which
implies the same for $|\ol{N}(S)|$.

The proof of the lower bound is similar, except that we don't have the strict containment $\ol{N}_{H_2}(S)
\subseteq \ol{N_G}(S)$.
However, by Lemma~\ref{lemma:degree}, any vertex in $S$ has $O(1)$ neighbors in $G$ that it does not have in $H_2$.
Therefore $|\ol{N}_{G}(S)| \le |\ol{N}_{H_2}(S)| + O(|S|)$. Because $|S| = \ell_1(p)$, and the Chernoff bound
gives $|N_{H_2}|=\Omega(\ell_1(p) (\ln n)^{10})$
w.h.p., this difference will be absorbed in the $(1+o(1))$ asymptotic factor.
\end{proof}

\begin{lemma}[Lemma 2.2 of \cite{BFS}]
W.h.p.\ there do not exist $S, A, B \subseteq n$ such that (conditions omitted) and every $x \in B$ has
fewer than $ap/2$ neighbors in $A$ (where $a = |A|$).
\end{lemma}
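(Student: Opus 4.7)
The plan is to reduce the lemma to its $G_{n,p}$ counterpart via the Kim--Vu lower sandwich $H_1\subseteq G$, which holds w.h.p. The condition ``every $x\in B$ has fewer than $ap/2$ neighbors in $A$'' is monotone decreasing in the edge set: if it holds in $G$, then it also holds in $H_1$, since for every vertex $x$ we have $|N_{H_1}(x)\cap A|\leq |N_G(x)\cap A|<ap/2$. Consequently, any bad triple $(S,A,B)$ for $G$ is automatically a bad triple for $H_1$, with the same parameters and the same threshold $ap/2$. This direction of the sandwich is the easy one -- we do not need the degree lemma here, unlike in the preceding Lemma 2.1 where both bounds on $|\ol{N}(S)|$ were needed.

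I would then apply Lemma 2.2 of \cite{BFS} to the random graph $H_1=G_{n,p_1}$. Since $p_1=p/(1+\d)$ with $\d=o(1)$, the natural threshold $ap_1/2$ appearing in the application of the lemma for $H_1$ differs from our effective threshold $ap/2$ by a factor of only $1+\d$. The proof in \cite{BFS} proceeds by a union bound over choices of $S,A,B$ followed by an application of the Chernoff bound \eqref{chl} to $\Bin(a,p_1)$, whose mean is $ap_1$. Replacing the deviation parameter $\half$ by $\half-O(\d)$ changes the Chernoff exponent only by a $(1+o(1))$ multiplicative factor, and this loss is absorbed by the exponential dominance of the Chernoff term over the polynomial union-bound factors computed in \cite{BFS}.

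What remains is to re-express the omitted size conditions on $S$, $A$, $B$, which in \cite{BFS} are stated in terms of $\ell_1(p)$, using the corresponding quantities $\ell_1(p_1)$ for $H_1$. By Bound~\ref{bound:frac1}, these agree up to a $(1+o(1))$ factor, so the ranges of the union bound shift only by a $(1+o(1))$ factor, which is again absorbed into the Chernoff decay. Intersecting with the event $\{H_1\subseteq G\}$, which occurs w.h.p.\ by Theorem~2 of \cite{KV}, yields the stated conclusion for $G_{n,d}$. The main obstacle I anticipate is purely the parameter bookkeeping -- verifying that every constant in the omitted hypothesis list can be transported from $p$ to $p_1$ without affecting the union bound; the probabilistic substance of the argument is essentially identical to that of \cite{BFS}.
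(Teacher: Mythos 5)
Your proposal is correct and follows essentially the same route as the paper: both exploit monotonicity of the property (a vertex with fewer than $ap/2$ neighbors in $A$ inside $G$ has fewer than $ap/2$ in $H_1 \subseteq G$), then re-run the Chernoff computation from \cite{BFS} with $\Bin(a,p_1)$ in place of $\Bin(a,p)$, observing that shifting the deviation parameter by $O(\delta)$ only weakens the exponent by a $(1-o(1))$ factor, which the union bound tolerates.
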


\begin{proof}
The proof of the corresponding lemma in \cite{BFS} relies on the distribution to say that the number of neighbors any
$x \in B$ has in $A$ is distributed according to the binomial distribution $B(a, p)$, and uses the Chernoff bound
$\Pr[B(a,p) \le ap/2]^{b_1} \le e^{- ab_1p /8}.$

The number of edges between $x$ and $A$ is bounded below by the number of such edges in the graph $H_1$, which is
distributed according to $B(a, p_1)$. So we replace the bound above by
$$\Pr[B(a,p_1) \le ap/2] = \Pr\left[B(a, p_1) \le \frac{a p_1(1 + \delta)}{2}\right] \le
\Pr\left[B(a, p_1) \le a p_1 \left(1 - \frac{1 - \delta}{2}\right)\right].$$
By the Chernoff bound, this is at most $e^{-ap_1(1 - \delta)^2/8} \le e^{-(1 - o(1)) a b_1 p/8},$
and the argument of \cite{BFS} still goes through.
\end{proof}

\begin{lemma}[Lemma 2.3 of \cite{BFS}]
Let $a_1 = 2000\e^{-2}$ where $\e$ is a small positive constant.
W.h.p.\ there do not exist sets of vertices $S, T_1, \dots, T_{a_1}$ such that (conditions omitted)
and $N(S) \cap T_i = \emptyset$ for $i = 1, \dots, a$.
\end{lemma}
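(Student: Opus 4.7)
The plan is to use monotonicity of the forbidden configuration together with the Kim--Vu coupling to reduce to the corresponding statement for $G_{n,p_1}$. The condition $N(S)\cap T_i=\es$ for all $i$ is monotone decreasing in the edge set: if $H_1\subseteq G$ then $N_{H_1}(S)\subseteq N_G(S)$, so any witness family $(S,T_1,\ldots,T_{a_1})$ in $G$ is also a witness family in $H_1$. Since $H_1\subseteq G$ w.h.p.\ by Theorem~2 of \cite{KV}, it suffices to prove that w.h.p.\ no such family exists in $H_1\sim G_{n,p_1}$.

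First I would invoke Lemma~2.3 of \cite{BFS} directly on $H_1\sim G_{n,p_1}$. The hidden size conditions on $|S|$ and the $|T_i|$ are phrased in terms of $\ell_1$ evaluated at the edge probability, so applying the lemma to $H_1$ yields the conclusion for sets of sizes governed by $\ell_1(p_1)$ rather than $\ell_1(p)$. By Bound~\ref{bound:frac1}, $\ell_1(p_1)=(1+o(1))\ell_1(p)$, and analogous neighborhood-size quantities scale by $1+o(1)$ through Bound~\ref{bound:exp}. Since $a_1=2000\e^{-2}$ depends only on the constant $\e$, we can absorb these $(1+o(1))$ perturbations by enlarging $\e$ infinitesimally, so that the conditions of Lemma~2.3 of \cite{BFS} for $G_{n,p_1}$ are implied by the conditions (stated in terms of $p$) for $G_{n,d}$. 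That lemma then gives $o(1)$ for the probability that a witness family exists in $H_1$, and the monotonicity argument above transfers this bound to $G$.

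The main obstacle, as in the preceding lemmas of this subsection, is verifying that each piece of the ``conditions omitted'' degrades by at most a $(1+o(1))$ factor when $p$ is replaced by $p_1$; this is exactly what Bounds~\ref{bound:frac1}--\ref{bound:exp} were set up to provide. A secondary concern is that the numerical constant $a_1=2000\e^{-2}$ is used in \cite{BFS} with some fixed slack, which must comfortably accommodate a relative $\delta=\Theta((\ln n/d)^{1/3})$ distortion; this is harmless because $\delta=o(1)$. Unlike Lemma~2.1, we never need the reverse containment $G\subseteq H_2$, so Lemma~\ref{lemma:degree} is not required here, making this sandwiching argument strictly simpler than those given for the earlier lemmas.
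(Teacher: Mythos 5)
Your high-level strategy matches the paper's: both use monotonicity (a witness family in $G$ is still a witness family in $H_1 \subseteq G$, since $N_{H_1}(S) \subseteq N_G(S)$), and both correctly observe that only the $H_1$ side of the Kim--Vu coupling is needed, so Lemma~\ref{lemma:degree} is irrelevant. The routes diverge at the next step, though, and yours has a gap. You propose invoking Lemma~2.3 of \cite{BFS} as a black box on $H_1 \sim G_{n,p_1}$, which yields a statement about families whose sizes are governed by $\ell_1(p_1)$, and then argue that the intended conditions (phrased via $\ell_1(p)$) are implied after ``absorbing the $(1+o(1))$ perturbations by enlarging $\e$ infinitesimally.'' But $\e$ is a \emph{fixed} constant — $a_1 = 2000\e^{-2}$ is determined before the asymptotics start — so ``enlarging $\e$ infinitesimally'' is not a legitimate move; an $n$-dependent $\e$ would make $a_1$ non-constant and invalidate the lemma you are trying to cite. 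Moreover, the conditions in \cite{BFS} involve exact size requirements (e.g.\ $|T_i| = \e\ell_1/21$), and a $(1+o(1))$-perturbed version of the lemma does not automatically cover the unperturbed sizes without an extra monotonicity argument about the conditions themselves, which you do not supply.

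The paper avoids this entirely by leaving the conditions untouched — still phrased in terms of $\ell_1(p)$ — and instead re-running the \emph{proof} of \cite{BFS} Lemma~2.3 with edge probability $p_1$ in place of $p$. The only place the edge probability enters the probability estimate is a factor of the form $(1-p)^{a_1\binom{\e\ell_1(p)/21}{2}}$, and the paper shows the inflation
\[
\left(\frac{1-p_1}{1-p}\right)^{a_1\binom{\e\ell_1(p)/21}{2}} \le \left(\frac{1-p_1}{1-p}\right)^{3\ell_1(p)^2} = \left(1 + \frac{p\d}{(1+\d)(1-p)}\right)^{3\ell_1(p)^2} = 1 + o(1),
\]
so the $o(1)$ probability bound survives. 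This is a genuinely different (and more surgical) use of the coupling: rather than applying the cited lemma at a perturbed parameter and arguing the statements agree, one keeps the statement fixed and perturbs only the probability computation inside the proof. To fix your argument you would either need to make this same move, or explicitly verify that the ``conditions omitted'' are monotone in $\ell_1$ in the right directions so that the $p_1$-version of the lemma subsumes the $p$-version.
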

\begin{proof}
If such sets exist in the graph $G$, then they will still exist when we lose some edges in passing to the graph $H_1$.
Examining the proof in \cite{BFS} we see that all it requires is to consider the following
factor which inflates the $o(1)$ probability estimate they use by:
$$\left( \frac{1 - p_1}{1 - p} \right)^{a_1\binom{\epsilon\ell_1(p)/21}{2}} \le
\left( \frac{1 - p_1}{1 - p} \right)^{3\ell_1(p)^2}=\brac{1+\frac{p\d}{(1+\d)(1-p)}}^{3\ell_1(p)^2}=1+o(1).$$
\end{proof}

\begin{lemma}[Lemma 2.4 of \cite{BFS}]
Let $t = \frac{n}{\ell_1(p) (\ln n)^7}.$ W.h.p.\ there do not exist pairwise disjoint sets of vertices $S_1, \dots,
S_t, U,$ such that (conditions omitted)
and $|U \cap \ol{N}(S_i)| \le \ell_1(p) (\ln n)^8$ for $i = 1, \dots, t$.
\end{lemma}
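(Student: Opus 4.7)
The strategy, following the treatment of Lemmas 2.1--2.3, is to transfer the bad event from $G \sim G_{n,d}$ to one of the sandwich graphs $H_1$ or $H_2$. Because the event asserts that $|U \cap \ol{N}_G(S_i)|$ is \emph{small}, we cannot appeal to the containment $H_1 \subseteq G$: passing to a subgraph can only grow the complementary neighborhood, so a small $|U \cap \ol{N}_G(S_i)|$ gives no useful bound on $|U \cap \ol{N}_{H_1}(S_i)|$. The plan is instead to work with $H_2$ and exploit the near-containment of $G$ in $H_2$ provided by Lemma~\ref{lemma:degree}.

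The key observation is that if $v \in \ol{N}_{H_2}(S_i) \setminus \ol{N}_G(S_i)$, then $v$ is joined to some vertex of $S_i$ via an edge of $G \setminus H_2$. Since $\Delta(G \setminus H_2) = O(1)$ w.h.p.\ by Lemma~\ref{lemma:degree}, the number of such $v$ is at most $O(|S_i|)$. Hence
\[
|U \cap \ol{N}_G(S_i)| \geq |U \cap \ol{N}_{H_2}(S_i)| - O(|S_i|).
\]
Since $|S_i|$ is (in the omitted hypotheses of Lemma 2.4 of \cite{BFS}) on the order of $\ell_1(p)$, the additive slack $O(|S_i|)$ is absorbed into $\ell_1(p)(\ln n)^8$ as a $(1+o(1))$ perturbation. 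Consequently, the bad event for $G$ implies an analogous event for $H_2$ with threshold $\ell_1(p)(\ln n)^8(1+o(1))$.

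It then remains to show that the probability of this inflated event in $H_2 \sim G_{n,p_2}$ is $o(1)$, and for this we invoke the proof of Lemma 2.4 of \cite{BFS} with $p$ replaced by $p_2$ and $\ell_1(p)$ replaced by $\ell_1(p_2)$. By Bound~\ref{bound:frac2}, these differ from the original quantities only by a $(1+o(1))$ factor, so the inflated target is still at most $\ell_1(p_2)(\ln n)^8(1+o(1))$ and the value of $t$ shifts only by a $(1+o(1))$ factor. The probability estimate in \cite{BFS} is a product of polynomial-in-$n$ choice factors and a term of the form $(1-p)^{k}$ for large $k$; switching $p \to p_2 = p(1+\delta)$ inflates each such term by at most $((1-p_2)/(1-p))^{k}$, which for the relevant $k$ is $1+o(1)$ exactly as in the treatment of Lemma 2.3.

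The main obstacle is bookkeeping rather than a new conceptual difficulty: one must check that every parameter change (the additive $O(|S_i|)$ slack from Lemma~\ref{lemma:degree}, the shift $p \to p_2$, and the shift $\ell_1(p) \to \ell_1(p_2)$) is absorbed by the slack built into the polylogarithmic factors $(\ln n)^7$ and $(\ln n)^8$ in the statement. These polylog exponents appear to have been chosen precisely to accommodate sandwiching arguments of this kind, so we expect no genuine difficulty; but care is needed to verify that the original probability bound of \cite{BFS} remains $o(1)$ under both perturbations simultaneously.
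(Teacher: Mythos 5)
Your proposal follows essentially the same route as the paper: transfer the bad event from $G$ to $H_2$ via Lemma~\ref{lemma:degree}, absorb the additive $O(|S_i|)$ slack into the $(\ln n)^8$ factor, and then argue that the $p \to p_2$ perturbation leaves the underlying probability estimate from \cite{BFS} at $o(1)$. The paper is slightly more precise about the last step --- it identifies the specific hinge $(1-p)^{\ell_0(p)} = \Omega((1-p)^{\ell_1(p)})$ and verifies the analogue for $p_2$ by splitting the exponent $\ell_0(p) = \ell_1(p) + C/p$ and invoking Bound~\ref{bound:exp} for the first factor and a constant bound for the second --- but this is a bookkeeping refinement of the same idea you sketch, and your appeal to ``the relevant $k$'' would, when made explicit, reduce to the same computation.
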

\begin{proof}
Suppose such sets exist in the graph $G$. By Lemma~\ref{lemma:degree} each vertex of $S_i$ has at most $O(1)$
neighbors in $G$ that are not in $H_2$; therefore in passing to the graph $H_2$, $|U \cap \ol{N}(S_i)|$ will be at most
$\ell_1(p) (\ln n)^8 + O(\ell_0(p))$ where each $|S_i|=\ell_0(p)=\ell_1(p) + C/p$ for some
constant $C$ (a fact we will use again). Since $\ell_1(p) = \frac{\ln n}{(\th+o(1))p}$, the size of $|U \cap
\ol{N}(S_i)|$ in $H_2$
is also $(1 + o(1))\ell_1(p) (\ln n)^8$. There is room in the argument of \cite{BFS} to prove this lemma for intersections
 of this size as well. Therefore we will proceed by arguing that w.h.p.\ sets such as $S_1, \dots, S_t, U$
do not exist in $H_2$.

The proof in \cite{BFS} hinges upon the claim that $(1 - p)^{\ell_0(p)} = \Omega( (1-p)^{\ell_1(p)}).$ So it suffices
to prove
that $(1 - p_2)^{\ell_0(p)} = \Omega( (1-p)^{\ell_1(p)})$. We split $(1 - p_2)^{\ell_0(p)}$ into
factors $(1 -p_2)^{\ell_1(p)}$ and
$(1 - p_2)^{C/p}$. By Bound~\ref{bound:exp}, the first factor is $\Omega((1-p)^{\ell_1(p)})$. The second factor is no less
than $(1 - p_2)^{C/p_2}$, which stays in $(e^{-C},4^{-C}]$ as $p_2$ ranges over $(0, 1/2]$, so it is effectively a
constant.
\end{proof}

\subsubsection{Lemmas used for the upper bound in \cite{BFS}}
As in the lower bound, the strategy used in \cite{BFS} to prove the upper bound relies on some properties
that hold w.h.p.\ in $G_{n,p}$.
Again, we apply the Kim-Vu Sandwich Theorem \cite[Theorem 2]{KV} to show that the same properties hold
in $G_{n,d}$ as well.

\newcommand{\C}{\ensuremath{\mathcal{C}}}
\newcommand{\B}{\ensuremath{\mathcal{B}}}
\newcommand{\comment}[1]{}
The first player's strategy described in \cite{BFS} is simple --- she chooses an uncolored vertex with
minimal number of available colors and then colors it with an arbitrary (available) color. We present
some notation used in \cite{BFS} during the analysis of the strategy. Given a (partial) coloring $\C$
and a vertex $v$ let $\alpha(v,\C)$ be the number of available colors for $v$ in $\C$. For a constant $\alpha > 3$ define
\[ \beta_G = \alpha\frac{n(np)^{-1/\alpha}}{\log_b np}, \qquad \qquad \gamma_G = \frac{10n\ln n}{\beta_G} \]
and
\[ \B(\C) = \{ v \mid \alpha(v) \leq \beta_G / 2\} . \]
The first lemma states that there are not many vertices with few available colors. We show that the same
is also true in $G_{n,d}$.
\begin{lemma}[Lemma 3.1 of \cite{BFS}]
W.h.p.\ for all collections $\C$,
\[ |\B(\C)|  \leq \gamma .\]
\end{lemma}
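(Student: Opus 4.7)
The plan is to transfer the $G_{n,p}$ version of this lemma (Lemma 3.1 of \cite{BFS}) from the random graph $H_2 \sim G_{n,p_2}$ in the Kim--Vu sandwich to $G = G_{n,d}$. Since removing edges at a vertex can only increase the number of colors available there, the natural direction is to compare $G$ with $H_2$: if $G$ were a subgraph of $H_2$ exactly, we would have $\B_G(\C) \subseteq \B_{H_2}(\C)$ immediately, and the lemma in $G$ would follow from the lemma in $H_2$. The slight inexactness of the containment $G \subseteq H_2$ is what needs to be handled.

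First I would invoke Lemma \ref{lemma:degree}: w.h.p.\ $|N_G(v) \setminus N_{H_2}(v)| \leq \Delta(G \setminus H_2) = M = O(1)$ for every $v$. Writing $\alpha_G(v,\C)$ and $\alpha_{H_2}(v,\C)$ for the numbers of available colors at $v$ in $G$ and $H_2$ respectively, this yields $\alpha_{H_2}(v,\C) \leq \alpha_G(v,\C) + M$ for every $v$ and every $\C$, because each of the at most $M$ neighbors of $v$ in $G \setminus H_2$ can make at most one extra color unavailable in $G$. Hence any $v \in \B_G(\C)$ satisfies $\alpha_{H_2}(v,\C) \leq \beta_G/2 + M$. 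Using the same computation that underlies Bound \ref{bound:frac2} (together with the scaling $d = n^\epsilon$), one checks that $\beta_{H_2} = (1 + \Theta(\delta))\beta_G$, so $\beta_{H_2} - \beta_G = \omega(1)$, and in particular $\beta_G/2 + M \leq \beta_{H_2}/2$ for $n$ large. This gives $\B_G(\C) \subseteq \B_{H_2}(\C)$.

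Finally, I would apply Lemma 3.1 of \cite{BFS} directly to $H_2 \sim G_{n, p_2}$ (which is valid since $p_2 = \Theta(d/n)$ lies in the range where that lemma holds) to obtain that w.h.p.\ $|\B_{H_2}(\C)| \leq \gamma_{H_2}$ simultaneously for all $\C$. Since $\beta_{H_2} > \beta_G$, we have $\gamma_{H_2} = 10 n \ln n / \beta_{H_2} < \gamma_G = \gamma$, so $|\B_G(\C)| \leq \gamma$ as required. The main obstacle is establishing the gap $\beta_{H_2} - \beta_G = \omega(1)$ that lets the additive $M = O(1)$ slack be absorbed: this uses both the polynomial growth of $\beta_G$ in $d$ (which relies on $\alpha > 3$ in the definition of $\beta_G$) and the polynomial decay of $\delta = \Theta((\ln n/d)^{1/3}) = n^{-\Omega(1)}$, and it is exactly the regime in which the Kim--Vu coupling is strong enough to allow a direct transfer.
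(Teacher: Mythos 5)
Your proof is correct, but it takes a genuinely different route from the paper's. The paper dispatches this lemma in one line by invoking Remark~\ref{rem1}: one takes the $G_{n,p}$ failure-probability estimate for the event $\{\exists\C:|\B(\C)|\geq \gamma_G\}$ (as in the proof of Lemma~\ref{lem5}), observes that it is small by a huge exponential margin, and then pays the conditioning cost $1/\r$, where $\r=\Omega\bigl((10d)^{-n/2}\bigr)$ is the probability that $G_{n,p}$ is $d$-regular. Since $1/\r$ grows only like $(10d)^{n/2}$ while the $G_{n,p}$ bound decays like $\exp\{-\Omega(n\ln d)\}$ with room to spare, the product is still $o(1)$. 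This argument steps outside the Kim--Vu framework altogether; it is the same crude ``inflate by $1/\r$'' device used throughout Section~\ref{Gnd1}.

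Your proof instead stays inside the Kim--Vu sandwich, which is more in the spirit of the rest of Section~\ref{usingKV}: you compare $G$ with $H_2\sim G_{n,p_2}$, control the imperfect containment via Lemma~\ref{lemma:degree}, and absorb the additive $M=O(1)$ slack into the gap $\beta_{H_2}-\beta_G=\omega(1)$ (which is indeed $\Theta(\delta\beta_G)=\Theta\bigl(d^{2/3-1/\alpha}(\ln n)^{1/3}/\ln d\bigr)\to\infty$ since $\alpha>3$). The counting step is right: a color available at $v$ in $H_2$ but not in $G$ forces a $G$-neighbour of $v$ that is absent in $H_2$, and the disjointness of the $C_i$ makes these distinct, giving $\alpha_{H_2}(v,\C)\le\alpha_G(v,\C)+M$ and hence $\B_G(\C)\subseteq\B_{H_2}(\C)$. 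Comparing the two approaches: the paper's is shorter and does not require re-examining $\beta_{H_2}$ versus $\beta_G$; yours is more robust if the exponential margin in the $G_{n,p}$ estimate were tighter, and it illustrates that the Kim--Vu coupling alone suffices for this lemma as it does for Lemmas 3.2, 2.1--2.4 of \cite{BFS}. One small point of hygiene worth making explicit: the containment $\B_G(\C)\subseteq\B_{H_2}(\C)$ holds only on the high-probability event $\Delta(G\setminus H_2)=O(1)$, so the final w.h.p.\ conclusion is a union bound over the failure of that event and the failure of the $H_2$-version of the lemma.
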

\proofstart
Here we can just use Remark \ref{rem1}.
\proofend

\begin{lemma}[Lemma 3.2 of \cite{BFS}]
W.h.p.\ every subset $S$ of $G_{n,p}$ of size $s$ spans at most $\phi = \phi(s) = (5ps + \ln n)s$ edges.
\end{lemma}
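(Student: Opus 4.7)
The plan is a routine first-moment/union bound over vertex subsets of each size $s$. In the nontrivial range $\phi(s)<\binom{s}{2}$, the probability that some $S\subseteq[n]$ with $|S|=s$ spans $\geq \phi(s)$ edges of $G_{n,p}$ is at most
\[
\binom{n}{s}\binom{\binom{s}{2}}{\lceil\phi(s)\rceil}p^{\lceil\phi(s)\rceil}\leq \bfrac{ne}{s}^{s}\bfrac{es^2p}{2\phi(s)}^{\phi(s)}.
\]
The estimate hinges on two elementary features of $\phi(s)=(5ps+\ln n)s$: since $\phi(s)\geq 5ps^2$ one has $\frac{es^2p}{2\phi(s)}\leq \frac{e}{10}$, and since $\phi(s)\geq s\ln n$, taking logarithms gives
\[
s\ln(ne/s)+\phi(s)\ln\bfrac{es^2p}{2\phi(s)} \;\leq\; s(\ln n+1) - s\ln n\cdot \ln(10/e)\;\leq\; -c\,s\ln n
\]
for some absolute $c>0$ and $n$ large, since $\ln(10/e)>1.3$. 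Hence each fixed-$s$ term is at most $n^{-cs}$, and summing the geometric-type tail over $s\geq 1$ yields $o(1)$.

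The trivial range $\phi(s)\geq\binom{s}{2}$ (which in particular covers $s\leq 2\ln n$ and $s=1$) needs nothing, since then the bound is satisfied deterministically; this handles any concern about the binomial coefficient $\binom{\binom{s}{2}}{\lceil\phi(s)\rceil}$ making sense at small $s$.

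To carry the conclusion into $G_{n,d}$ in the style of Section~\ref{usingKV}, one couples $G=G_{n,d}$ with $H_2\sim G_{n,p_2}$, $p_2=p(1+\delta)$. For any $S$ of size $s$,
\[
e_G(S)\;\leq\; e_{H_2}(S)+\bigl|E(G\setminus H_2)\cap \tbinom{S}{2}\bigr|\;\leq\; e_{H_2}(S)+\tfrac{s}{2}\Delta(G\setminus H_2),
\]
and Lemma~\ref{lemma:degree} gives $\Delta(G\setminus H_2)=O(1)$ w.h.p., so the overflow is only $O(s)$. This is absorbed into the $s\ln n$ slack built into $\phi(s)$, and replacing $p$ by $p_2=(1+o(1))p$ only perturbs the constants in the main estimate. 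I do not expect any real obstacle: $\phi(s)$ is designed to exceed both the mean $\binom{s}{2}p$ by a fixed multiplicative factor and the deviation scale $s\ln n$ additively, which is exactly what a uniform-in-$s$ Chernoff-type calculation needs in order to beat the $\binom{n}{s}$ entropy factor.
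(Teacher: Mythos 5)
Your first-moment proof of the $G_{n,p}$ statement is correct, but it is not what the paper does here. In the paper the lemma is simply \emph{quoted} from \cite{BFS} (which in fact proves the tighter bound $\phi_2(s)=(4.5ps+0.9\ln n)s$), and the entire proof in this section is devoted to transferring the bound from $H_2\sim G_{n,p_2}$ to $G=G_{n,d}$ via the Kim--Vu coupling. Your union-bound computation --- exploiting $\phi(s)\ge 5ps^2$ to drive the base of the exponential below $e/10$, and $\phi(s)\ge s\ln n$ to overwhelm the $\binom{n}{s}$ entropy --- is a perfectly sound, self-contained derivation of the $G_{n,p}$ fact, essentially reproving \cite{BFS}'s Lemma~3.2 rather than citing it. What each approach buys is clear: the paper's route is shorter because it leans on \cite{BFS} and moreover inherits the sharper constants $(4.5,0.9)$, which turn out to be exactly what it needs for the transfer; your route is self-contained but delivers only the nominal constants $(5,1)$.

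This is where your sketch of the $G_{n,d}$ transfer has a genuine gap. Applying your own bound (with constants $(5,1)$) to $H_2\sim G_{n,p_2}$, $p_2=p(1+\delta)$, gives $e_{H_2}(S)\le(5p_2 s+\ln n)s$; adding the $O(s)$ overflow from $\Delta(G\setminus H_2)=O(1)$ yields
\[
e_G(S)\;\le\;\bigl(5ps(1+\delta)+\ln n+O(1)\bigr)s,
\]
which strictly exceeds the target $(5ps+\ln n)s$: the $O(s)$ term is indeed absorbed by the $s\ln n$ slack as you claim, but the multiplicative shift $5ps\to 5ps(1+\delta)$ is not absorbed by \emph{additive} slack in the $\ln n$ term --- it needs multiplicative slack in the $ps$ term. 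To make the transfer go through you must first prove the $G_{n,p}$ bound with strictly smaller constants (your exponent estimate has roughly $0.3\,s\ln n$ of room, so this is doable) and only then pass to $G_{n,d}$. That is precisely why the paper invokes the $(4.5,0.9)$ version from \cite{BFS}: the gap between $(4.5,0.9)$ and $(5,1)$ swallows both the $(1+\delta)$ factor and the $O(s)$ overflow. Your proposal gestures at this (``replacing $p$ by $p_2$ only perturbs the constants'') but does not actually build in the needed slack, and as written the final inequality fails.
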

\begin{proof}
The proof in \cite{BFS} actually gives the result for $\phi_2 = (4.5ps + 0.9 \ln n)s$.
Thus, applying Lemma 3.2 of \cite{BFS} to $H_2$ gives that w.h.p.\ every set $S$ of size $s$
spans at most $(4.5p_2s + 0.9 \ln n)s$ edges. Since w.h.p.\ every vertex of $G$ touches at most
$O(1)$ edges not in $H_2$, we have that w.h.p.\ the number of edges spanned by $S$ in $G$ is bounded by
\[ \left(4.5p_2s + 0.9 \ln n + O\left(1\right)\right)s = (4.5ps(1+o(1)) +
0.9 \ln n+O(1))s \leq (5ps + \ln n)s \]
as required.
\end{proof}
This completes the proof of Theorem \ref{th3}.

\section{Theorem~\ref{th4}: Random Cubic Graphs} \label{Gnd=3}

Consider the coloring game on $G_{n,3}$ with three colors. We
describe a strategy for B that wins the game for him w.h.p., so
$\chi_g(G_{n,3}) \ge 4$ w.h.p. This proves Theorem~\ref{th4}: in general $\chi_g(G) \le \Delta(G) + 1$
where $\Delta$ denotes maximum degree, and in particular $\chi_g(G_{n,3}) \le 4$.

We proceed in two steps. First, we describe a strategy for B that wins the game, given the existence
of a subgraph $H$ in $G$ satisfying certain conditions. Next, we will prove that w.h.p.,
a random cubic graph contains such a subgraph.

\subsection{The winning strategy}

We will say that two vertices are \emph{close} if they are connected by a path of
length two or less, and that a path is \emph{short} if some vertex on it is close to both endpoints.
(This is not the same as being of length at most four). Vertices that are not close are
{\em far apart} and a path that is not short is {\em long}.
The motivation for this terminology is that coloring a vertex can only have an effect on vertices that are
close to it; we will make this precise later on.

\begin{figure}
\centerline{\includegraphics{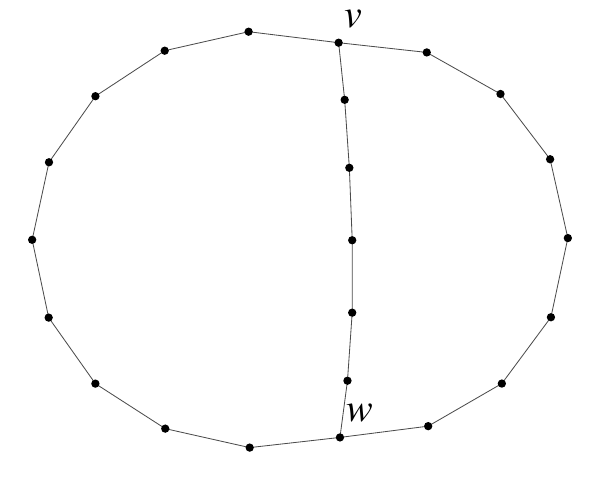}}
\caption{The subgraph $H$ required for Bob's strategy on Random Cubic Graphs.}
\label{fig:cubic1}
\end{figure}

We first assume the existence of a subgraph $H$ with the following properties (see Figure~\ref{fig:cubic1}):
\begin{enumerate}
\item $H$ consists of two vertices, $v$ and $w$, together with three (internally disjoint) paths from one to the other.
\item Each of the paths consists of an even number of edges.
\item No two vertices in $H$ are connected by a short path outside of $H$ (in particular, $H$ is induced).
\item The three paths themselves are all long.
\end{enumerate}
In addition, \\
{\bf Property F:} if A goes first, then the vertex colored by A on her first move is far from $H$.

B first plays on the vertex $v$. Provided A's next move is not on the vertex $w$, or on the
neighbors of $v$ or $w$, it is close to at most one of the three paths which make up $H$
(this follows from Properties 3 and 4). The
remaining two paths form a cycle containing $v$, with no other already colored vertices
close to the cycle; by Property 2, the cycle is even. Call the vertices around the cycle $(v, v_1, v_2, \dots, v_{2k-1})$.

Starting from this even cycle, B proceeds as follows. He colors $v_2$ a different color
from $v$; this creates the threat that on his next move, he will color the third neighbor of $v_1$
the remaining color, leaving no way to color $v_1$ and winning. We will call such a move by
B a \emph{forcing move at $v_1$}. A can counter this threat in several ways:
\begin{itemize}
\item By coloring $v_1$ the only remaining viable color.
\item By coloring $v_1$'s third neighbor the same color as either $v$ or $v_2$.
\item By coloring that neighbor's other neighbors in the color different from both $v$ and $v_2$.
\end{itemize}
In all cases, A must color some vertex close to $v_1$, that does not lie on the cycle.

B continues by making a forcing move at $v_3$: coloring $v_4$ a different color from $v_2$.
Continuing to play on the even vertices $v_{2i}$, B makes forcing moves at each odd $v_{2i-1}$.
By Property 3 of $H$, the set of vertices A must play on to counter each threat are disjoint; thus,
A's response to each forcing move does not affect the rest of the strategy. By Property F, A's
first play does not affect the strategy either.

When B colors $v_{2k-2}$, this is a forcing move both at $v_{2k-3}$ and at $v_{2k-1}$ (provided
Bob chooses a color different both from $v_{2k-4}$ and $v$). A cannot counter both threats, therefore B wins.

We now account for the remaining few cases. If A colors a neighbor of $v$ or $w$ on her second
move, this vertex will be close to all three possible even cycles. However, we know that all three paths in $H$ have
even length. Therefore we can still apply this strategy to the even cycle not containing the vertex A colored.
Even though it will be close to $v$ or $w$, we will never need to force at $v$ or at $w$, because we only force at
odd numbered vertices along the path.

Finally, if A colors $w$ itself, then there is no path we can choose that will avoid the vertex.
Instead, B picks any of the paths from $v$ to $w$, and makes forcing
moves down that path. Provided that the path is sufficiently long to do so (which follows from Property 4),
the final move will be a forcing move in two ways, winning the game for B once again.

\subsection{Proof of the existence of $H$}

It remains to show that the subgraph $H$ exists w.h.p.\ (even allowing for A's first move).
We will assume $G$ is chosen by adding a random perfect matching to a cycle $C$ on $n$ vertices, and find $H$ w.h.p.
That this is a contiguous model to $G_{n,3}$ is well known,
see \cite{Worm}. In the following, let $c$ be a constant; we will later see
that we need $c$ to be less than $1$ for the proof to hold.

We begin by counting {\em good} segments of length $m=\rdown{c \sqrt{n}}$ on $C$, by which we mean
those with no internal chords.
First of all let $X$ be twice the number of chords that intercept segments of length $m$ or less -- these are the
only chords that could possibly be internal to a segment of the desired length. $X$ can be written as
the sum $X_1 + X_2 + \cdots X_n$, where $X_i$ is the 0-1 indicator for the $i$-th vertex (call it $v_i$)
to be the endpoint of such a chord. Also, let $Y_i$ denote the length of the smaller of the two segments
defined by $v_i$ (this segment stretches from $v_i$ to its partner). Thus
$$\Pr(Y_i=t)=\begin{cases}
              \frac{2}{n-1}&2\leq t\leq \rdown{(n-1)/2}\\
              \frac{1}{n-1}&t=n/2,\ n\text{ even}
             \end{cases}
$$
Clearly $X_i = 1$ if and only if $Y_i \le m$, and so
$$\E(X_i)=\frac{2m}{n-1} \text{ and }
\E(X) =\frac{2mn}{n-1}\approx 2c\sqrt{n}.$$
In addition, $\Var(X_i) \le \E(X_i)$,
and so
$$\Var(X) = \sum_{i=1}^n \Var(X_i) +\sum_{i \ne j} \Cov(X_i, X_j) \le
\E(X) + \sum_{i \ne j} \Cov(X_i, X_j).$$
Now
\begin{align*}
\Cov(X_i, X_j)&=-\frac{4m^2}{(n-1)^2}+\sum_{t=2}^m\Pr(X_i=1\mid Y_j=t)\Pr(Y_j=t)\\
&\leq-\frac{4m^2}{(n-1)^2}+ \frac{2m}{n-3}\cdot \frac{2m}{n-1}\\
&=\frac{8m^2}{(n-1)^2(n-3)}.
\end{align*}
Thus,
$$\Var(X)\leq \E(X)+\frac{8m^2n}{(n-1)(n-3)}\approx\E(X).$$
By Chebyshev's inequality,
$$\Pr(|X - \E(X)| \le \lambda \E(X)) \le \frac{\Var(X)}{\lambda^2 \E(X)^2} \le \frac{2}{\lambda^2 c \sqrt{n}}.$$
Putting $\lambda=n^{-1/5}$ we see that w.h.p.\ $X \sim 2c\sqrt{n}$.

Consider the $n$ different segments of length $m$ on $C$. Each chord counted by $X$ eliminates
at most $m$ of these segments as being good, which leaves $(1 - c^2)n$ segments remaining. We will want non-overlapping
good segments; each good segment overlaps at most $2m$ other good segments and so we can assume that we can find
$2n_1\sim (c^{-1} - c)\sqrt{n}/2$ non-overlapping good segments w.h.p. Here the segments are $\s_1,\s_2,\ldots,\s_{2n_1}$
are in clockwise order around $C$. We pair them together $P_i=(\s_i,\s_{n_1+i}),i=1,2,\ldots,n_1$.

\begin{figure}
\centerline{\includegraphics[height=0.45\textheight]{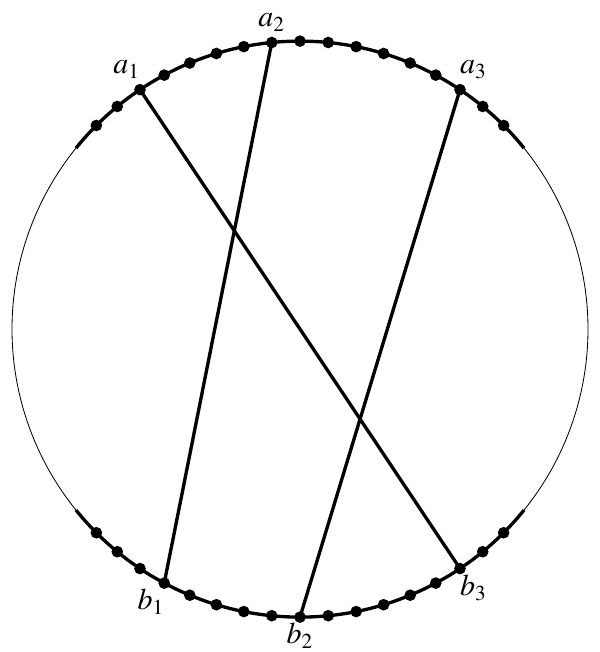}}
\caption{A typical example of the subgraph $H$ found in the Hamiltonian cycle model.}
\label{fig:cubic2}
\end{figure}

Pick any pair $P_j$. If there are exactly 3 chords from one segment to the other, as in
Figure~\ref{fig:cubic2}, then we will construct
$H$ as follows (assuming $a_i$ and $b_i$ are the endpoints of the chords, as labeled in Figure~\ref{fig:cubic2}):
\begin{itemize}
\item Set $v$ and $w$ to be $a_2$ and $b_2$, respectively.
\item The first path from $v$ to $w$ is $(a_2, \dots, a_1, b_3, \dots, b_2)$, where the vertices in the ellipses
are chosen along $C$.
\item The second path from $v$ to $w$ is $(a_2, b_1, \dots, b_2)$.
\item The third path from $v$ to $w$ is $(a_2, \dots, a_3, b_2)$.
\end{itemize}
The paths given above require that the three chords are $(a_1, b_3)$, $(a_2, b_1)$, and $(a_3, b_2)$.
In order for $H$ to satisfy
Properties 2 and 4, we impose conditions on the lengths of the paths $(a_1, \dots, a_2)$, $(a_2, \dots, a_3)$, $(b_1,
\dots, b_2)$, and $(b_2, \dots, b_3)$: they must not be too small, and must have the right parity so that the three paths
from $v$ to $w$ have even length. However, these conditions (and the condition that $(a_2, b_2)$ must not be a chord)
eliminate only a constant fraction of the possible chords; therefore there are $\Omega(m^6)$ ways to choose the chords.

The probability, then, that a subgraph $H$ can be found between two given good segments, is at least
\beq{asd}
\Omega\bfrac{m^6}{n^3} \cdot \left(1 - \frac{m}{n -2 m}\right)^{2m}
\eeq
where the last factor bounds the probability that there are no extra chords between the two segments.
This tends to a constant $\z_1$ that does not depend on $n$. Thus the expected number of $j$ for which
$P_j$ has Properties 1,2 and 4 are satisfied is at least $\z_1n_1$.

We now consider the number of pairs of good segments in which we can hope to find this structure.
In order to ensure that,
should a subgraph $H$ be found, it satisfies Property F, we eliminate all pairs which contain a vertex close
to the vertex $A$ chooses on her first move -- a constant number of pairs.

To ensure Property 3 we eliminate all pairs $P_j$ in which two
vertices have chords whose other endpoints are $1$ or $2$ edges apart.  This happens with probability
$O(1/n)$ for any two vertices, regardless of the disposition of the other chords incident with the segments
in $P_j$. The pair $P_j$s contains $\binom{2 m}{2} \le 2c^2 n$
pairs of vertices. Therefore with probability at least $\left(1 - O(1/n)\right)^{2 c^2 n}$, which tends to a constant,
$\z_2$ say, a pair $P_j$ satisfies Property 3.

Thus the expected number of $j$ for which the pair $P_j$ give rise to a copy of $H$ satisfying all required properties
is at least $\z n_1$ where $\z=\z_1\z_2$. To prove concentration for the number of $j$ we can simply use the
Chebyshev inequality. This will work, because exposing the chords incident with a particular pair $P_j$
will only have a small effect on the probability that any other $P_j'$ has the required properties.
\proofend

\end{document}